\theoremstyle{plain}
\newtheorem{lemma}{Lemma}[section]
\newtheorem{theorem}[lemma]{Theorem}
\newtheorem{proposition}[lemma]{Proposition}
\newtheorem{corollary}[lemma]{Corollary}
\newtheorem*{convention*}{Convention}
\theoremstyle{definition}
\newtheorem{definition}[lemma]{Definition}
\newtheorem{example}[lemma]{Example}
\newtheorem{remark}[lemma]{Remark}
\newtheorem*{definition*}{Definition}
\theoremstyle{remark}
\newcommand{\smallO}[1]{\ensuremath{\mathop{}\mathopen{}o\mathopen{}\left(#1\right)}}
\newcommand{\groupoid}{\mathcal{G} \rightrightarrows M}
\newcommand{\MagnusTwo}[2]{\mathcal{M}_{{#1}}({#2})}
\newcommand{\MagnusOne}[1]{\mathcal{M}({#1})}
\newcommand{\ad}{\text{ad}}
\newcommand\s[2]{#1\left[[#2]\right]}
\title{Symplectic groupoids for Poisson integrators}
\author{Oscar Cosserat}
\address{LaSIE, UMR 7356 du CNRS \& La Rochelle Université,
Av. Michel Cr\'epeau, 17042 La Rochelle Cedex 1, France}
\email{oscar.cosserat@univ-lr.fr}
\date{\today}
\begin{document}

\begin{abstract}
    We use local symplectic Lie groupoids to construct Poisson integrators for generic Poisson structures. More precisely, recursively obtained solutions of a Hamilton-Jacobi-like equation are interpreted as Lagrangian bisections in a neighborhood of the unit manifold, that, in turn, give Poisson integrators. We also insist on the role of the Magnus formula, in the context of Poisson geometry, for the backward analysis of such integrators.
\end{abstract}

\maketitle

\tableofcontents

\newpage

\section*{Introduction}

In this paper, we address the question of Poisson integrators. Those are examples of so-called geometric integrators, which are numerical methods for solving differential equations, designed to preserve some geometric structure naturally associated to the studied system. The first and the best studied geometric numerical methods are symplectic integrators, also called symplectic schemes\footnote{Throughout this paper we will use ``integrators'', ``schemes'' and ``numerical methods'' as synonyms.}. Symplectic integrators (see e.g. \cite{yoshida}) are suited to discretize the flow of Hamiltonian equations, and as their name suggests, are designed to preserve the symplectic structure in the process. Qualitatively, this results in a better control on the conservation of the energy of the system (\cite{aziz}), even for simulations on large time intervals. Designed in the early eighties, they are now widely used in various applications, like conservative large scale molecular dynamics \cite{Verlet1967}. A lot of works followed, that attempted to preserve various structures naturally associated to the phase space of the system or to the system itself; we are not trying to make a full literature review on the matter, and we orient a motivated reader to \cite{Hairer2006} and references therein for integrators preserving several structures from classical differential geometry, and to a more recent review \cite{LHS} for structures coming from more contemporary ``higher'' and ``generalized'' geometry. 

Poisson geometry permits to generalize simultaneously Hamiltonian mechanics on a symplectic manifolds and Lie group dynamics. Furthermore, it is an efficient tool to study symmetries of a large class of dynamical systems, arising from conservative equations such as the ones of celestial mechanics \cite{Arnol'd1974}, rigid body \cite{Marle1987}, Toda lattices \cite{KhaoulaBenAbdeljellil}, Korteweg-de-Vries equation \cite{Khesin1997}, Lotka-Volterra systems \cite{vanhaecke2016}, to cite a few. Except for the first one, those are associated to non-symplectic Poisson structures.  
A natural question is then the design of numerical methods that take into account this geometry in order to find reliable approximations of solutions. And this question was indeed addressed right after the appearance of symplectic integrators. The first of them were based on an important result that Poisson manifold is foliated into symplectic leaves \cite{weinstein}, the idea being essentially that the dynamics shoud be restricted to a leaf, so that a usual symplectic integrator can be used. The main issue of this approach is that having a Poisson structure where one can explicitly (and globally) describe the leaves is a very strong assumption, so the class of systems where the construction applies is rather small. The next class of papers (\cite{Ge1990}, \cite{Mac1993}) made a step forward in this direction, enforcing the condition of preservation of the leaves of the Poisson foliations, being often not explicit but conceptually more appropriate. However, we have observed (see Example \ref{ex:counter-ex}) that some of these constructions applied naively do not produce the desired results in terms of energy conservation. More recently, the authors \cite{DeLeon2017} have constructed Poisson integrators for dual of Lie algebroids (i.e. fiberwise linear Poisson structures on a vector bundle), understood them through Hamilton-Jacobi equations and Lagrangian bisections (see also \cite{Ge1990}). These are elements that appear in the present study as well, but now for generic Poisson structures.

Indeed, in this paper, we revisit and explain the above mentioned problems in a more conceptual and general framework. We introduce a (stronger) notion of a Hamiltonian Poisson integrator, which takes into account simultaneously the geometry of the phase space (Poisson structure) and the physics of the system (Hamiltonian function). Moreover we make this idea constructive by using the local symplectic groupoid associated to Poisson manifolds. Since the symplectic groupoid inducing this Poisson structure on its unit can be thought of as a bigger foliated space where the foliation has been desingularized, the discritized dynamics we suggest uses heavily the idea to lift the picture to this groupoid and project back at each time step -- with an explicit construction.

The article is organized as follows.
In sections 1 and 2, we introduce the necessary mathematical background for the construction of Hamiltonian Poisson integrators. First,  we adapt the Magnus formula to time-dependent Hamiltonian systems. Second, we explain the concept of families of Lagrangian bisections of symplectic groupoids. This is already enough to formulate the notion of Hamiltonian Poisson integrators and give several properties, like, e.g. backward analysis. Then in section 3, we use an adaptation of the Hamilton-Jacobi equation to make this idea constructive, namely to produce smooth families of Lagrangian bisections inducing Poisson integrators that approximate at any given order the Hamiltonian flow.

In the sequel, $(M,\pi)$ is a Poisson manifold, whose Poisson bracket will be denoted by $\{F,G\}$ for all $F,G \in  \mathcal{C}^{\tiny \infty}(M)$. Also, $\epsilon \in I \subset \mathbb R $ is a real number (thought of as being small and positive when having numerical applications in mind), called \emph{discretization parameter}.

Below is a list of references for several notions that we will not recall:
\begin{enumerate}
    \item Poisson manifolds $\left(M,\pi= \{\cdot, \cdot\}\right)$, \cite{LPV2012,Crainic2021}. The Poisson structure will be denoted by $\pi$ (when considered as a section of $ \wedge^2 TM$) or by $(F,G) \mapsto \{F,G\} $ when considered as biderivation of smooth functions.
    \item Lie groupoids and local Lie groupoids \cite{Mackenzie1987,Weinstein1987}, denoted respectively as $\groupoid$ and $\mathfrak U(M) \rightrightarrows M$. For all considered local or global groupoids, the source shall be denoted by $\alpha $ and the target by $ \beta$.
\end{enumerate}

\section{Hamiltonian Magnus formula}\label{sec:Magnus}

For $A(t)$ a time-dependent linear operator, the Magnus formula  allows to make the time $\epsilon$ flow of a time-dependent linear differential equation $\dot{x}=A(t)x$ of order $1$ as an exponential $x(\epsilon) = {\mathrm{exp}}(B_\epsilon) x(0)$. In general, there are convergence issues that forbid $B_\epsilon$ to be defined out of $A(t) $ for a given value of $\epsilon $, but it is well-defined as a formal series in $\epsilon $.
More generally, the Magnus formula allows to express, up to convergence issues, the flow at a given time $\epsilon$ of a time-dependant left-invariant vector field on a Lie group by an exponential trajectory at time $1$ of a left invariant vector field depending on $\epsilon$ (but not depending on the time $t$). A review on Magnus expansion can be found in \cite{Blanes2008}. The aim of the present section is to adapt the idea to time-dependent Hamiltonian differential equations on a Poisson manifold. 

A \emph{time-dependent function} on a manifold $M $ is a family $(H_t)_{t \in I}$ of functions on $M$ that depend smoothly on the parameter $t$ in the sense that $ (m,t) \mapsto H_t(m)\in \mathcal{C}^{\infty} (M \times I)$. 
For $(M,\pi) $ a Poisson manifold, a time-dependent function $(H_t)_{t \in I} \in \mathcal{C}^\infty(M \times I)$ will be referred to as a \emph{time-dependent Hamiltonian function}. It induces a time-dependent vector field $ X_{H_t} := \{ H_t,\cdot\}$ called \emph{time-dependent Hamiltonian vector field}. 

We call \emph{formal Taylor expansion} of $(H_t)_{t  \in I} $ the formal series 
$$H\left[[\epsilon]\right] := \sum_{i \geq 0} \frac{\epsilon^i}{i!} \left. \frac{\partial^i H_t}{\partial t^i} \right|_{t=0} \, \, \, \in \mathcal{C}^\infty(M)\left[[\epsilon]\right].$$

One must not confuse the formal Taylor expansion of a Hamiltonian function  $(H_t)_{t  \in I} $ (which does not depend on the Poisson structure $\pi $) with a second and more subtle formal series in $\mathcal{C}^\infty(M)\left[[\epsilon]\right]$ defined as follows:

\begin{definition}
The Magnus formal series 
$$\MagnusTwo{\epsilon}{H} 
= 
\sum\limits_{i=0}^{\infty}  \frac{\epsilon^i}{i!} \MagnusOne{H}_i \in \mathcal{C}^{\infty} (M)\left[[\epsilon]\right]$$
of $(H_t)_{t \in I}$ is defined by the formal differential equation:
\begin{equation}\label{magnus}
\left\{
    \begin{array}{ll}
        \MagnusTwo{0}{H}  &= 0\\
        \partial_\epsilon \MagnusTwo{\epsilon}{H}  &= \sum\limits_{i=0}^{\infty} \frac{B_i}{i!} \text{ad}^i_{\MagnusTwo{\epsilon}{H}} \, \s{H}{\epsilon}
    \end{array}
\right.
\end{equation}
where $\text{ad}_{\MagnusOne{H}} = \{\, \MagnusOne{H} , \cdot \, \}$,
$\text{ad}_{\MagnusOne{H}}^i$ is the $i$-th power of the endomorphism $\text{ad}_{\MagnusOne{H}}$, and

$\text{ad}^0_{\MagnusOne{H}} = Id$. 
Here, $(B_i)_{i \in \mathbb N}$ is the Bernoulli sequence, defined by its generating function: $ \frac{x}{\exp(x)-1} = \sum\limits_{i=0}^{\infty}\frac{B_i}{i!}x^i.$

\end{definition}

The terms of the Magnus formal series $\MagnusOne{H}\left[[\epsilon]\right]$ can be computed recursively out of Equation \eqref{magnus}, which ensures its existence and uniqueness.

\begin{remark} There is another expression of the Magnus formal series obtained out of sucessive integration of \eqref{magnus}, which results in the practical formula \cite{Blanes2008}:
\begin{equation}
    \label{Magnus_brackets}
\begin{array}{rcl}
    \MagnusTwo{\epsilon}{H}  &=&\int_0^\epsilon H_t \text{dt} \\ && 
    - \frac{1}{2} \int_0^{\epsilon} \Big\{ \int_0^{t_1} H_{t_2} \, \text{dt}_2, H_{t_1}  \Big\} \,  \text{dt}_1 \\ &&
    + \frac{1}{6} \int_0^{\epsilon} \Big\{\int_0^{t_1} \Big\{ \int_0^{t_2} H_{t_3} \, \text{dt}_3, H_{t_2}   \Big\} \, \text{dt}_2 ,  H_{t_1}  \Big\}\, \text{dt}_1 \\ &&
    + \ldots
\end{array}
\end{equation}
Let us explain the meaning of this expression. Assume we wish to compute the third term $\frac{\epsilon^3}{3!}\MagnusOne{H}_3 $ in the Magnus formal series. For that purpose, it suffices to find the term in $\epsilon^3 $ in each one of the first two terms of \eqref{Magnus_brackets}:
\begin{equation}
\begin{array}{rcl}
    \int_0^\epsilon H_t \text{dt} &=& \epsilon H_0 + \frac{\epsilon^2}{2}   \frac{\partial H_t}{\partial t}_{|t=0}  + \frac{\epsilon^3}{6}   \frac{\partial^2 H_t}{\partial t^2}_{|t=0}  + \cdots  \\   \int_0^{\epsilon} \Big\{ \int_0^{t_1} H_{t_2} \, \text{dt}_2, H_{t_1} \Big\} \, \text{dt}_1 &=& \frac{\epsilon^3}{6}\{ H_0, \frac{\partial H_t}{\partial t}_{|t=0} \} + \cdots\\
\end{array}
\end{equation}
and to add them up.
\end{remark}

\begin{example}
For a time-independent Hamiltonian $(H_t)_{t \in I}$ with $H_t=H$ for all $t \in I$, the Magnus formal series is $\epsilon H$.
\end{example}

\begin{example}
If $M$ and $N$ are two Poisson manifolds, $\phi \colon M \to N$ is a Poisson map and $(H_t)_{t \in I}$ is a time-dependent Hamiltonian on $N$: $\MagnusTwo{\epsilon}{\phi^*H} = \phi^*\MagnusTwo{\epsilon}{H}.$
\end{example}

For any $k \in \mathbb{N}$, we call \emph{$k$-th Magnus truncation Hamiltonian} and we denote by $$\MagnusTwo{\epsilon}{H}^{(k)} := \sum_{i=0}^{k} \frac{\epsilon^i}{i!} \MagnusOne{H}_i$$
the sum of the $k+1$ first terms of the Magnus formal series $\MagnusTwo{\epsilon}{H}.$
By construction, for all given $\epsilon $, $\MagnusTwo{\epsilon}{H}^{(k)} \in \mathcal C^\infty(M)$ is a smooth Hamiltonian function on $M$.

\begin{theorem}
\label{theo:Magnus}
Let $(H_t)_{t \in I} $ be a time-dependent Hamiltonian on a Poisson manifold $(M,\pi)$.
For any  $k \in \mathbb{N}$:
\begin{enumerate}
\item[(i)] the flow $\Phi_{(H_t)_t}^\epsilon$, at time $\epsilon$, of the time-dependent Hamiltonian $(H_t)_{t\in I} \in \mathcal C^\infty(M \times I)$,
 \item[(ii)] and the flow $\Phi_{\MagnusTwo{\epsilon}{H}^{(k)}}^1$, at time $1$, of the $k$-th Magnus truncation Hamiltonian $\MagnusTwo{\epsilon}{H}^{(k)} \in \mathcal C^\infty(M)$,
\end{enumerate}
coincide up to order $k$ in $\epsilon$.

In other words, for all $f \in \mathcal{C}^\infty(M)$ and $  0 \leq j \leq  k $:
\begin{equation}
  \left.  \frac{\partial^j}{\partial \epsilon^j} \right|_{\epsilon = 0} \left(\Phi_{(H_t)_t}^\epsilon - \Phi_{\MagnusTwo{\epsilon}{H}^{(k)}}^1 \right)^* \, f \,= \,0.
\end{equation}

\end{theorem}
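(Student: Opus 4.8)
The plan is to pass to the action on functions and to recognise the claim as the (formal) Magnus theorem together with an elementary truncation estimate. Since the assertion only constrains the $\epsilon$-derivatives at $\epsilon=0$ up to order $k$, I would work throughout with $\epsilon$-jets, i.e. treat every flow via its pullback, expanded as a formal power series in $\epsilon$ with coefficients differential operators on $\mathcal C^\infty(M)$. Writing $U_\epsilon := (\Phi_{(H_t)_t}^\epsilon)^*$ and differentiating $(U_\epsilon f)(m) = f(\Phi_{(H_t)_t}^\epsilon(m))$ in $\epsilon$ gives the initial value problem
\begin{equation*}
U_0 = \mathrm{Id}, \qquad \partial_\epsilon U_\epsilon = U_\epsilon \circ \ad_{H_\epsilon},
\end{equation*}
in which $H_\epsilon$ may be replaced by its formal Taylor series $\s{H}{\epsilon}$, since each coefficient of $\epsilon^j$ sees only finitely many $t$-derivatives of $H_t$ at $t=0$. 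On the autonomous side, a fixed Hamiltonian $G$ pulls its time-$1$ flow back to the exponential $\exp(\ad_G)$; hence $(\Phi_{\MagnusTwo{\epsilon}{H}^{(k)}}^1)^* = \exp\!\big(\ad_{\MagnusTwo{\epsilon}{H}^{(k)}}\big)$, and the whole statement reduces to comparing this with $U_\epsilon$.

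The backbone is the untruncated identity $U_\epsilon = \exp\!\big(\ad_{\MagnusTwo{\epsilon}{H}}\big)$ as formal power series, from which the theorem will follow. I would prove it by showing that $V_\epsilon := \exp(\Omega_\epsilon)$, with $\Omega_\epsilon := \ad_{\MagnusTwo{\epsilon}{H}}$, solves the same formal initial value problem as $U_\epsilon$; uniqueness then forces $U_\epsilon = V_\epsilon$, the coefficients of $\epsilon^j$ being determined recursively. The translation from functions to operators is powered by the fact that $F \mapsto \ad_F$ is a morphism of Lie algebras, $[\ad_F, \ad_G] = \ad_{\{F,G\}}$: applying $\ad_{(-)}$ to the defining equation \eqref{magnus} turns the function-level iterated brackets $\ad^i_{\MagnusTwo{\epsilon}{H}}\,\s{H}{\epsilon}$ into iterated operator commutators of $\Omega_\epsilon$ against $\ad_{\s{H}{\epsilon}}$. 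Feeding the resulting equation for $\partial_\epsilon \Omega_\epsilon$ into the derivative-of-exponential formula $\exp(-\Omega_\epsilon)\,\partial_\epsilon\exp(\Omega_\epsilon) = \frac{1-e^{-\mathrm D_\epsilon}}{\mathrm D_\epsilon}(\partial_\epsilon \Omega_\epsilon)$, where $\mathrm D_\epsilon := [\Omega_\epsilon,\cdot\,]$, the Bernoulli generating function collapses the composite multiplier, and after reabsorbing the leftover conjugation into $\exp(\Omega_\epsilon)$ one recovers a first-order equation for $V_\epsilon$ of exactly the same shape as the one satisfied by $U_\epsilon$.

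For the truncation it suffices to observe that $\MagnusTwo{0}{H}=0$, so $\MagnusTwo{\epsilon}{H}$ and $\MagnusTwo{\epsilon}{H}^{(k)}$ have no constant term and each of $\ad_{\MagnusTwo{\epsilon}{H}}$, $\ad_{\MagnusTwo{\epsilon}{H}^{(k)}}$ raises the $\epsilon$-order by at least one. Applied to a fixed $f$ the exponential series is therefore $\epsilon$-adically convergent and order-preserving. Since $\MagnusTwo{\epsilon}{H} - \MagnusTwo{\epsilon}{H}^{(k)} = \sum_{i>k}\frac{\epsilon^i}{i!}\MagnusOne{H}_i = \bigO{\epsilon^{k+1}}$, the telescoping identity $\Omega^n - {\Omega'}^n = \sum_{j}\Omega^j(\Omega-\Omega'){\Omega'}^{\,n-1-j}$ propagates this gap through every power, giving $\exp(\ad_{\MagnusTwo{\epsilon}{H}}) - \exp(\ad_{\MagnusTwo{\epsilon}{H}^{(k)}}) = \bigO{\epsilon^{k+1}}$. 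Combined with the untruncated identity this yields $U_\epsilon - (\Phi_{\MagnusTwo{\epsilon}{H}^{(k)}}^1)^* = \bigO{\epsilon^{k+1}}$, i.e. the vanishing of $\partial_\epsilon^j|_{\epsilon=0}$ of the pulled-back difference for $0 \le j \le k$, which is the assertion.

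The step I expect to be the genuine obstacle is closing up the second paragraph with the correct signs. The multiplier $\frac{1-e^{-x}}{x}$ and the generating function $\frac{x}{e^x-1}$ compose not to $1$ but to $e^{-x}$, so whether one lands exactly on the equation for $U_\epsilon$ depends delicately on the invariance side of the flow equation (left versus right multiplication, equivalently pullback versus pushforward) and, correspondingly, on whether the relevant generating function is $\frac{x}{e^x-1}$ or $\frac{x}{1-e^{-x}}$ — these differ only in the sign of $B_1$, hence only in the single-bracket Magnus term. I would pin this down by a direct order-$\epsilon^3$ check: expanding both sides and reducing with the Jacobi identity $\{\{a,b\},f\} = \{a,\{b,f\}\} - \{b,\{a,f\}\}$ isolates precisely the $B_1$-sensitive coefficient and fixes the convention unambiguously. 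The remaining ingredients — uniqueness of the formal solution and the order estimate — are routine once this bookkeeping is settled.
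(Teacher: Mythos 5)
Your architecture is essentially the paper's, repackaged: the paper also passes to $\epsilon$-jets of pullback operators and plays the derivative-of-the-exponential formula against the Bernoulli recursion \eqref{magnus}; the only organizational difference is that it works directly with the truncation $\MagnusTwo{\epsilon}{H}^{(k)}$ and shows the transport equation holds up to $\smallO{\epsilon^{k-1}}$, whereas you first prove the exact formal identity for the full series and then truncate by your telescoping estimate. That reorganization is fine, and your truncation step, the formal uniqueness of the initial value problem, and the identification of the $\epsilon$-jet of the time-one flow of an $\bigO{\epsilon}$ Hamiltonian with its exponential are all correct and routine, as you say.

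The genuine gap is exactly the step you flag and postpone, and it is not a removable formality: carried out honestly, your second paragraph does not close. With the paper's conventions ($X_H=\{H,\cdot\}$, so $F\mapsto\ad_F$ is a Lie algebra homomorphism, and $B_i$ generated by $x/(e^x-1)$), your own computation yields $\partial_\epsilon V_\epsilon = V_\epsilon\, e^{-\mathrm D_\epsilon}(\ad_{H_\epsilon}) = \ad_{H_\epsilon}\circ V_\epsilon$, the \emph{left}-sided equation, while $U_\epsilon$ satisfies the \emph{right}-sided $\partial_\epsilon U_\epsilon = U_\epsilon\circ\ad_{H_\epsilon}$; these two problems first diverge at order $\epsilon^3$, by $\tfrac{\epsilon^3}{6}\,\ad_{\{H_0,\,\partial_t H_t|_{t=0}\}}$. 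Executing the order-$\epsilon^3$ check you propose (take $H_t=H^0+tH^1$ quadratic on $\mathbb R^2$, so $X_{H}(m)=A_H m$ with $A_{\{F,G\}}=-[A_F,A_G]$, and compare with the classical matrix Magnus for $\dot Y=A(t)Y$) shows the true flow is $\exp\bigl(\epsilon A_0+\tfrac{\epsilon^2}{2}A_1-\tfrac{\epsilon^3}{12}[A_0,A_1]+\cdots\bigr)$, i.e.\ the function-level exponent must carry $+\tfrac{\epsilon^3}{12}\{H^0,H^1\}$ — the sign \emph{opposite} to \eqref{Magnus_brackets}. So the backbone identity $U_\epsilon=\exp\bigl(\ad_{\MagnusTwo{\epsilon}{H}}\bigr)$ holds for the generating function $x/(1-e^{-x})$ (i.e.\ $B_1=+\tfrac12$), not for $x/(e^x-1)$ as stated. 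You should know that the paper's own proof contains the compensating slip: it inserts the multiplier $\sum_i\tfrac{1}{(i+1)!}\ad^i=\tfrac{e^x-1}{x}$ to the right of the pullback, whereas the correct right-sided multiplier is $\tfrac{1-e^{-x}}{x}$; the two sign errors cancel and ``prove'' Theorem \ref{theo:Magnus} with the stated Bernoulli convention. Your hesitation was therefore the sound instinct; completing your check would have revealed that either Definition \eqref{magnus} needs $B_1\mapsto+\tfrac12$ (equivalently $\ad_{\MagnusOne{H}}=\{\cdot,\MagnusOne{H}\}$), or a flow/bracket convention must be reversed — but as a proof of the statement with the paper's literal conventions, the proposal (and, at this one point, the paper) is incomplete.
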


\begin{proof}
The computation is a formal Hamiltonian analog of \cite{Blanes2008}.

Set $f_\epsilon^{(k)} \in \mathcal{C}^\infty(M)\left[[\epsilon]\right]$ the formal Taylor expansion of $\Phi_{\MagnusTwo{\epsilon}{H}^{(k)}}^1 {}^*f$, where pull-backs of smooth maps are defined on formal series in an obvious way. The definition of $\MagnusOne{H}$ implies the following equalities of formal series :

\begin{align*}
    \partial_\epsilon (f_\epsilon^{(k)}) &= \Phi_{\MagnusTwo{\epsilon}{H}^{(k)}}^1 {}^* \sum_{i=0}^\infty \frac{1}{(i+1)!} \ad^i_{X_{{\MagnusTwo{\epsilon}{H}^{(k)}}}} . \partial_\epsilon X_{\MagnusTwo{\epsilon}{H}^{(k)}}(f) \\
                                                              &=  \Phi_{\MagnusTwo{\epsilon}{H}^{(k)}}^1 {}^* \sum_{i=0}^\infty \frac{1}{(i+1)!} \ad^i_{X_{\MagnusTwo{\epsilon}{H}^{(k)}}} . \sum_{j = 0}^{k-1} \frac{B_j}{j!} \text{ad}^j_{X_{\MagnusTwo{\epsilon}{H}^{(k)}}} . X_{H_\epsilon} (f) + \smallO{ \epsilon^{k-1}}\\
                                                              &=  \Phi_{\MagnusTwo{\epsilon}{H}^{(k)}}^1 {}^* X_{H_{\epsilon}}(f)  + \smallO{ \epsilon^{k-1}}
\end{align*}

As $\Phi_{(H_t)_t}^0 = \Phi_{\MagnusTwo{0}{H}^{(k)}}^1 = \text{Id},$ the result follows by differentiation.
\end{proof}

\begin{remark}
Theorem \ref{theo:Magnus} can be restated using functions of particular interest in mechanics, namely local coordinates $x$ on $M$:
\begin{equation}
    \forall \; 0 \leq j \leq k, \; \; \frac{\partial^j}{\partial \epsilon^j} \big|_{\epsilon = 0} (\Phi_{(H_t)_t}^\epsilon - \Phi_{\MagnusTwo{\epsilon}{H}^{(k)}}^1 )(x)= 0
\end{equation}
to get an equality of $k$-th jet of curves at $x.$
\end{remark}

\begin{remark}
A particular case of the Magnus formula in the symplectic setting appears in \cite{Koseleff93}, Section 19, where the author studies symplectic integrators for the harmonic oscillator. Up to different conventions, Equation (19.9) is the Magnus formula of the Hamiltonian of Equation (19.11).
\end{remark}

Several time-dependent Hamiltonian vector fields we dealt with in this section arise while studying geometric integrators of Hamiltonian systems that do not depend on time.
Indeed, under some general assumptions, each iteration of a Poisson integrator for a Hamiltonian $H$ is the time $\epsilon $-flow of a time-dependent Hamiltonian $(h_t)$, as will be detailed in \ref{sec:back_analysis}.
To have an integrator at order $k$, we will require the Magnus series $\MagnusTwo{\epsilon}{h}$ of $(h_t)$ to coincide with $\epsilon H$ at order $k$.

\section{Poisson integrators}\label{sec:Poisson_integrators}

In order to define and study Poisson integrators, we recall simple facts of symplectic geometry.

\subsection{Smooth families of Lagrangian bisections}\label{sec:smooth Lag}

\begin{definition}
\label{def:smoothfamily}
Let $V$ be a manifold. A family $ (L_t)_{t\in I}$ of submanifolds of $V$ parametrized by $I$ is said to be a \textit{smooth family of submanifolds} of $V$ if $L_I = \{(x,t) \in V \times I, x \in L_t\}$ is a submanifold of $V\times I$ such that the restriction to $L_I$ of the projection $V \times I \rightarrow I$ is a surjective submersion.
\end{definition}

From now on, we fix $(L_t)_{t\in I}$ a smooth family of submanifolds of $V$, and $ L_I \subset V \times I$ as in Definition \ref{def:smoothfamily}. 
Let $NL_t =TV|_{L_{t}}/ TL_{t}  $ be the normal bundle of $L_t$. We claim that there is a canonically defined smooth section 
$$\left[ \tfrac{\partial L_t}{\partial t}\right] \in \Gamma(NL_{t})$$
called the \emph{normal variation of $(L_t)_{t\in I}$ at $t_0$}.
We start by a definition.

\begin{definition}
A smooth path $ \gamma  \colon J \to V $, for $J \subset I$ an open interval, is said to be an  $(L_t)_{t\in I}$-path if  $ \gamma(s) \in L_s $ for all $s \in J$. Equivalently, an  $(L_t)_{t\in I}$-path is a smooth path $\gamma \colon J \to V $ such that $s \mapsto (\gamma(s),s) $ is valued in the submanifold $L_I \subset V \times I$.
\end{definition}

\noindent
The existence, uniqueness and smoothness of the normal variation follow from the three items of Lemma \ref{lem:normalvariation} respectively.

\begin{lemma}
\label{lem:normalvariation}
Let $ (L_t)_{t \in I}$ be as above.
\begin{enumerate}
    \item Let $t_0 \in I$ and $x \in L_{t_0}$. There exists at least one $(L_t)_{t\in I}$-path $\gamma $, defined in an open neighborhood of $t_0$, such that $\gamma(t_0)=x $. 
    \item For any two paths $\gamma_1, \gamma_2 $ as in the first item,
    $ \dot{\gamma}_1(t_0)-  \dot{\gamma}_2(t_0) \in T_x L_{t_0}$.
    \item  The map assigning to $x \in L_{t_0}$
    the class in the normal bundle
    of the derivative $\dot{\gamma}(t_0) $ of a path as in the first item
    is a smooth section of the normal bundle.
\end{enumerate}
\begin{proof}
The $I$-valued path $t \mapsto t $ lifts
through  $\xymatrix{L_ I \ar@{->>}[r]& I  }$ to a path $\gamma $ with $\gamma(t_0)=(x,t_0) $, because the latter map is a surjective submersion by assumption. 
The two remaining items are straightforward and left to the reader.
\end{proof}
\end{lemma}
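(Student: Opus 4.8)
The plan is to prove the three items as, respectively, the existence of local sections of a submersion, their uniqueness modulo the fibre tangent space, and the smoothness of a lifted vector field. Throughout I write $p \colon L_I \to I$ for the surjective submersion assumed in the statement, and I note that an $(L_t)_{t\in I}$-path through a point $x \in L_{t_0}$ is precisely the $V$-component of a local section $\sigma$ of $p$ with $\sigma(t_0) = (x,t_0)$: writing $\sigma(s) = (\gamma(s), s)$, the condition $\sigma(s) \in L_I$ is exactly $\gamma(s) \in L_s$. For item (1) I would invoke the local normal form for submersions: around $(x,t_0)$ there exist coordinates on $L_I$ in which $p$ is the projection onto the $I$-coordinate, and freezing the remaining coordinates at their values at $(x,t_0)$ produces a smooth local section $\sigma$ with $\sigma(t_0) = (x,t_0)$. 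Then $\gamma := \mathrm{pr}_V \circ \sigma$ is the required path.

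For item (2), the plan is to read off the difference $\dot\gamma_1(t_0) - \dot\gamma_2(t_0)$ inside $T_{(x,t_0)}(V \times I) = T_x V \oplus T_{t_0} I$. Any $(L_t)_{t\in I}$-path $\gamma_i$ through $x$ lifts to $\sigma_i(s) = (\gamma_i(s), s) \in L_I$, so $\dot\sigma_i(t_0) = (\dot\gamma_i(t_0), 1)$ lies in $T_{(x,t_0)} L_I$. Since $p$ is a submersion, $d_{(x,t_0)} p$ maps $T_{(x,t_0)} L_I$ onto $T_{t_0} I$ with kernel the tangent space to the fibre $p^{-1}(t_0) = L_{t_0} \times \{t_0\}$, that is $T_x L_{t_0} \oplus \{0\}$. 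The difference $\dot\sigma_1(t_0) - \dot\sigma_2(t_0) = (\dot\gamma_1(t_0) - \dot\gamma_2(t_0), 0)$ has vanishing $I$-component, hence is killed by $d_{(x,t_0)}p$ and therefore lies in $T_x L_{t_0} \oplus \{0\}$; this is exactly the assertion $\dot\gamma_1(t_0) - \dot\gamma_2(t_0) \in T_x L_{t_0}$.

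For item (3), I would make the construction of item (1) depend smoothly on $x$ by lifting a single vector field and flowing. Choose a smooth vector field $\xi$ defined near $L_{t_0} \times \{t_0\}$ in $L_I$ that is $p$-related to the coordinate field $\partial_t$ on $I$; such a lift exists because $dp$ is fibrewise surjective (take a local complement to the fibre tangent bundle, or patch local choices with a partition of unity). Its flow yields $\Sigma(x,s) := \Phi_\xi^{\,s - t_0}(x, t_0)$, a smooth map with $\Sigma(x,t_0) = (x,t_0)$ and $p(\Sigma(x,s)) = s$, so that $s \mapsto \mathrm{pr}_V \Sigma(x,s)$ is an $(L_t)_{t\in I}$-path through $x$ for each $x$, varying smoothly with $x$. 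Its velocity at $t_0$ equals $\mathrm{pr}_V(\xi(x,t_0))$, which is smooth in $x$, so passing to classes in $NL_{t_0}$ gives a smooth section; by item (2) this class is independent of the chosen lift $\xi$, so the normal variation is the desired well-defined smooth section.

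I expect the only real work to be in item (3): items (1) and (2) are the pointwise manifestation of the submersion hypothesis, whereas the smoothness of the normal variation needs the global-in-$x$ input of a smoothly varying family of lifts, which the flow of a $p$-related vector field provides.
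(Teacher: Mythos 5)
Your proof is correct and takes essentially the same approach as the paper: item (1) is the same lifting-through-the-submersion argument (the paper lifts the path $t \mapsto t$ through $L_I \to I$; your submersion normal form is precisely how such a lift is constructed), while items (2) and (3) are declared straightforward and left to the reader in the paper, and you fill them in correctly. Your kernel computation $\ker d_{(x,t_0)}(p|_{L_I}) = T_x L_{t_0} \oplus \{0\}$ for (2) and the flow of a $p$-related lift of $\partial_t$ (patched by a partition of unity) for the smoothness in (3) are exactly the intended details.
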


\begin{figure}[!h]
    \centering
    \includegraphics[scale = 0.25]{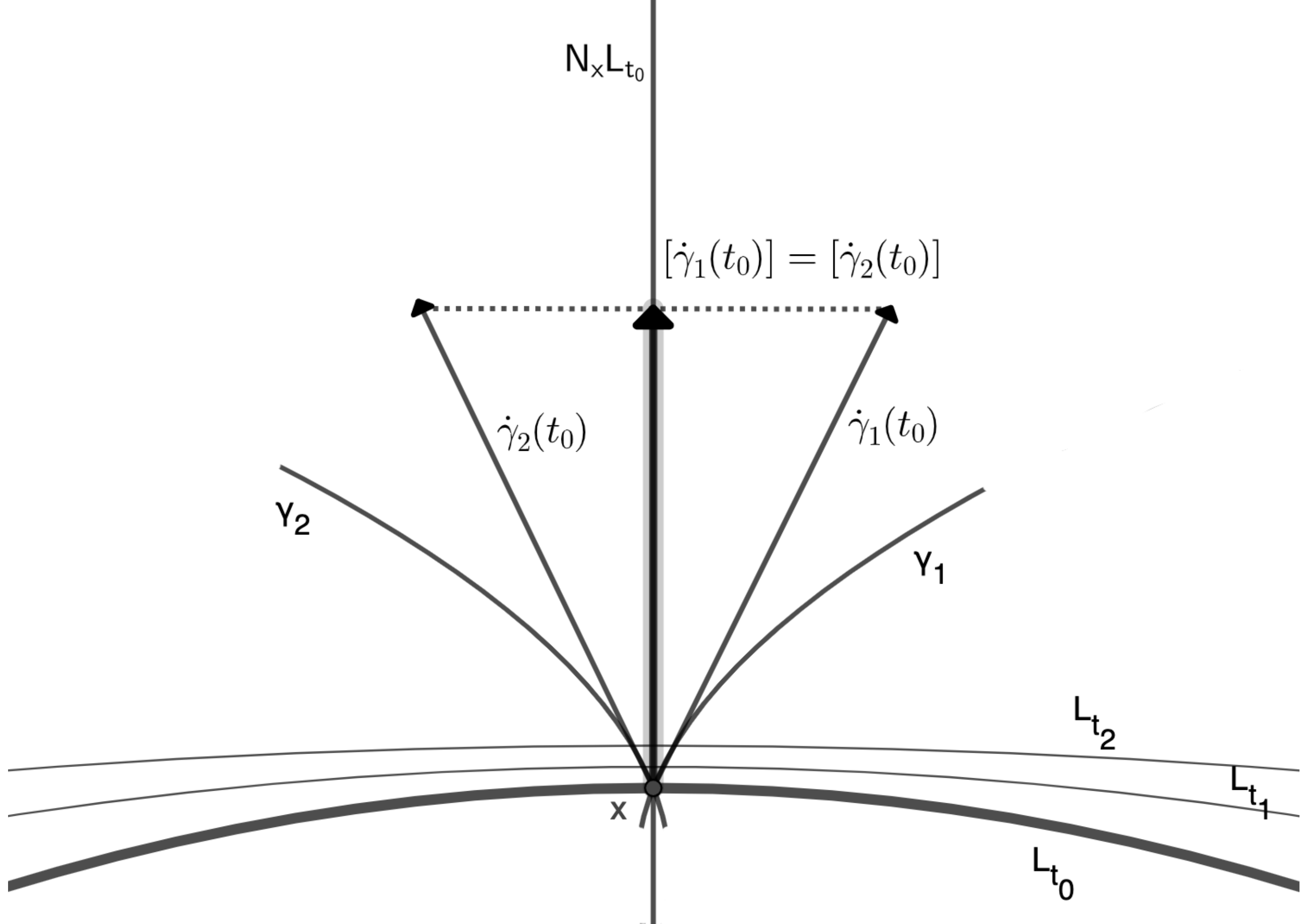}
    \caption{Normal variation of $L_{t_0}$ at $x$ out of two L-paths $\gamma_1$ and $\gamma_2$.}
\end{figure}

From now on, let us assume that $V$ is equipped with a symplectic $2$-form $\omega_V $.
A smooth family of submanifolds $(L_t)_{t \in I} $ is said to be \emph{a smooth family of Lagrangian submanifolds } when all the submanifolds $L_t$ are Lagrangian. Under these assumptions, for all $t \in I$, the normal bundle $TV/TL_t$ is canonically isomorphic to $T^*L_t$ and the normal variation is a family of $1$-forms $\xi_t \in \Omega^1(L_t)$, called \textit{variation form of $(L_t)_{t \in I} $ at $t$}. In equation:
\begin{equation}\label{eq:glue_1forms}
    \omega_V \left( \left. \left[ \tfrac{\partial L_t}{\partial t}(x)\right] \right|_{t} , u \right)  =  \xi_t  (u)   \hbox{ for all $u \in T_x L_t$}
\end{equation}

\noindent
The following Lemma is left to the reader as well:

\begin{lemma}\label{lem_closed}
Let $(L_t)_{t \in I} $ be a smooth family of Lagrangian submanifolds.
The variation form $\xi_t \in \Omega^1(L_t) $ of $(L_t)_{t \in I} $ at $t \in I$ is a closed $1$-form.
\end{lemma}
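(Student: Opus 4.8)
The plan is to realize the variation form $\xi_{t_0}$ as the pullback to $L_{t_0}$ of an interior product $\iota_Y\omega_V$, and then exploit the closedness of $\omega_V$ together with the fact that each $L_t$ is Lagrangian.

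First I would fix $t_0 \in I$ and a point $x_0 \in L_{t_0}$, and construct locally a smooth map $\Psi \colon U \times (-\delta,\delta) \to V$, with $U$ an open neighborhood of $x_0$ in $L_{t_0}$, satisfying $\Psi(x,0)=x$ and $\Psi(x,s) \in L_{t_0+s}$ for all $(x,s)$. Such a $\Psi$ is obtained by choosing a vector field $Z$ on the submanifold $L_I \subset V \times I$ projecting onto $\partial_t$ under the surjective submersion $L_I \to I$ (this is where the smooth-family hypothesis of Definition \ref{def:smoothfamily} and Lemma \ref{lem:normalvariation} enter), and flowing $Z$ from the fiber over $t_0$; composing the resulting flow with $L_I \hookrightarrow V \times I \to V$ gives $\Psi$. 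Writing $\Psi_s := \Psi(\cdot,s) \colon U \to V$ and $Y(x) := \partial_s\Psi(x,s)|_{s=0} \in T_xV$, the curve $s \mapsto \Psi(x,s)$ is an $(L_t)_{t\in I}$-path through $x$, so by construction the class of $Y(x)$ in $NL_{t_0}$ is the normal variation; by \eqref{eq:glue_1forms} this means $j_{t_0}^*(\iota_Y\omega_V) = \xi_{t_0}$ on $U$, where $j_{t_0}\colon L_{t_0}\hookrightarrow V$ is the inclusion. Although $Y$ depends on the choice of $\Psi$, the $1$-form $j_{t_0}^*(\iota_Y\omega_V)$ does not, precisely because $L_{t_0}$ is Lagrangian, so $\omega_V(v,u)=0$ whenever $v,u \in T_xL_{t_0}$.

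Second, since for each fixed $s$ the map $\Psi_s$ takes values in the Lagrangian submanifold $L_{t_0+s}$, it factors through $L_{t_0+s}$ and hence $\Psi_s^*\omega_V = 0$ identically in $s$. Differentiating this identity at $s=0$ with the standard variation formula for pullbacks along a homotopy,
\begin{equation*}
\frac{d}{ds}\Big|_{s=0}\Psi_s^*\omega_V = \Psi_0^*(\iota_Y d\omega_V) + d\big(\Psi_0^*(\iota_Y\omega_V)\big),
\end{equation*}
and using $d\omega_V = 0$ together with $\Psi_0 = j_{t_0}$, I obtain $d\big(j_{t_0}^*(\iota_Y\omega_V)\big)=0$, that is $d\xi_{t_0}=0$ on $U$. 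Since $x_0$ and $t_0$ were arbitrary, $\xi_t$ is closed for every $t$.

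The routine parts are the pointwise identification via \eqref{eq:glue_1forms} and the bookkeeping in the homotopy variation formula. The main obstacle is the first step: producing a genuine smooth vector field $Y$ along $L_{t_0}$ whose normal class equals the abstractly-defined normal variation, i.e. upgrading the pointwise, path-based description of $[\partial_t L_t]$ into something one can feed into Cartan's calculus. This is exactly what lifting $\partial_t$ through the submersion $L_I \to I$ accomplishes, legitimate by Definition \ref{def:smoothfamily}; care is only needed to keep the construction smooth in $x$, which the flow of $Z$ guarantees.
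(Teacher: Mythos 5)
Your proof is correct. The paper itself offers no argument here --- Lemma~\ref{lem_closed} is explicitly left to the reader --- so there is no official proof to diverge from; what you wrote is the natural completion, and it is sound at every step. Your two ingredients are exactly the ones needed: first, you upgrade the pointwise, path-based normal variation to a smooth vector field $Y$ along $L_{t_0}$ by lifting $\partial_t$ through the submersion $L_I \to I$ (legitimate locally by the submersion normal form, and locality is all you need, since closedness of $\xi_{t_0}$ is a local condition); second, you differentiate the identity $\Psi_s^*\omega_V = 0$ at $s=0$ via the homotopy variation formula, where $\iota_Y\omega_V$ is to be read as a form along the map $\Psi_0 = j_{t_0}$ (or, equivalently, one first extends $Y$ arbitrarily to a neighborhood of $x_0$ in $V$), and $d\omega_V=0$ then yields $d\xi_{t_0}=0$ on $U$. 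Two small remarks. In your well-definedness aside, only isotropy of $L_{t_0}$ is used, and that aside is not strictly needed for the proof, since a single choice of lift suffices; it does, however, reconfirm that \eqref{eq:glue_1forms} is consistent. Also, an alternative and equally standard route --- closer in spirit to the paper's Example~\ref{1-forms} --- is to invoke Weinstein's Lagrangian neighborhood theorem to identify a neighborhood of $L_{t_0}$ in $V$ with a neighborhood of the zero section of $T^*L_{t_0}$, so that the nearby $L_{t_0+s}$ become graphs of closed $1$-forms $\zeta_s$ with $\zeta_0=0$, and $\xi_{t_0}=\partial_s\zeta_s|_{s=0}$ is closed as a derivative of closed forms. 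Your argument trades that appeal to Weinstein's theorem for the Cartan-type calculus, which is a fair exchange and arguably more self-contained given the lemmas already established in Section~\ref{sec:smooth Lag}.
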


\begin{definition}
We call \emph{exact} a smooth family of Lagrangian bisections $(L_t)_{t \in I} $ such that its corresponding variation $1$-forms $(\xi_t)_{t \in I} $ are exact; we call  \emph{variation functions} their primitives, i.e.  some time-dependent functions $(h_t)_{t \in I} $  such that $ dh_t = \xi_t$ for all $t \in I$.
\end{definition}

\begin{remark}
For a given smooth family of \emph{exact} Lagrangian bisections,  the family of time-dependent functions $(h_t)_{t \in I} $ is unique up to a function that depends only on $t$. 
\end{remark}

Here are two important classes of smooth families of Lagrangian submanifolds.

\begin{example}\label{Ham_flow}
Let $V$ be a symplectic manifold, and $H \in C^\infty(V) $ a Hamiltonian function whose Hamiltonian vector field admits a flow for all $t \in \mathbb R$. For every Lagrangian submanifold $L \subset V,$ the family $L_t = \phi^t_H (L) $ is an exact smooth family of Lagrangian submanifolds. An $L$-path starting at $x \in L$ is given by the flow of $H$ $(\phi^t_H(x))_t$ and the variation form of $L$ at $t$ is the restriction to $L_t$ of the exact form $dH$.
\end{example}

\begin{example}\label{1-forms}
Let $T^*Q$ be a cotangent bundle. For every smooth family of closed one-forms $(\zeta_t)_{t\in I}$ on $Q,$ their graphs $L_t = \bar\zeta_t = \{\zeta_t(x), x \in Q \}$ are a smooth family of Lagrangian submanifolds. An $L$-path starting at $x \in \bar \zeta_0$ is given by the 1-forms $(\zeta_t(x))_t$ and the variation form at $t$ is $\tau_{|\bar\zeta_t}^* \partial_t\zeta_t,$ where $\tau$ is the cotangent projection and $\tau_{|\bar\zeta_t}$ its restriction to $\bar\zeta_t.$
\end{example}

Variation forms behave well with respect to symplectomorphisms, as explained in the following example.

\begin{example}\label{Symplecto}
Let $(V,\omega_V)$ and $(W,\omega_W)$ be two symplectic manifolds,\\$\phi \colon V \stackrel{\sim}{\longrightarrow} W$ a bijective symplectomorphism and $(L_t)_t$ a smooth family of Lagrangian submanifolds with variation 1-forms $\xi_t.$ Then, $\tilde{L}_t = \phi(L_t)$ is also a smooth family of Lagrangian submanifolds with variation 1-forms $\tilde{\xi_t}$ that verify $\phi^*\tilde{\xi_t} =\xi_t.$
\end{example}

\subsection{Usual Hamilton-Jacobi equation revisited}\label{sec:classic_Ham_Jac}
We use variation forms to reinterpret the usual Hamilton-Jacobi equation in terms of smooth families of Lagrangian submanifolds. Consider a Hamiltonian $H \in \mathcal C^\infty(T^*Q)$ on the cotangent bundle $T^*Q$ of a manifold $Q$. The Hamilton-Jacobi equation consists in looking for a family of functions $S_t \in C^\infty(Q \times Q)$, depending smoothly in $t$ in some interval $J \subset \mathbb R$, such that for every $(q,\bar q) \in Q\times Q$ and every $t \in J$:
\begin{equation}\label{HJ_usual}
    \partial_t S_t (q, \overline{q}) = H( \text{d}_q S_t(q, \overline{q}) )
\end{equation}
where
$\text{d}_q S_t(q, \overline{q}) \in T_q^* Q$ is the differential of $S(\cdot, \bar{q})$ at the point $q$, so that
\\
$\nolinebreak{(q, \bar q) \mapsto H( \text{d}_q S_t(q, \overline{q}) ) \in C^\infty(Q \times Q)}$.
 
\begin{theorem}[Hamilton-Jacobi theorem for a cotangent bundle]\label{thm:HamiltonJacobi}
If $(S_t)_{t \in J}$ verifies \eqref{HJ_usual}, if there exist open subsets $\mathcal U \subset  Q \times Q $ and $\mathcal V \subset T^*Q $ between which $(q,\overline{q}) \mapsto  \text{d}_{q} S_t(q, \overline{q} )$ is a diffeomorphism for every $t \in  J$ and if their exists $\epsilon_0 \in J$ such that the Hamiltonian flow $\Phi_H^{\epsilon_0}$ of $H \in \mathcal C^\infty(T^*Q)$ at time $\epsilon_0$ is given by: $$ \Phi_H^{\epsilon_0}(\zeta) = \text{d}_{\overline{q}} S_{\epsilon_0} (q, \overline{q}) \hspace{1cm} \hbox{ $\forall \zeta \in \mathcal V \subset T^* Q $} $$
where $(q,\overline{q}) \in Q \times Q$ is the unique element in $\mathcal U$ that satisfies
$ \zeta = -\text{d}_{q} S_{\epsilon_0}(q, \overline{q}),$\\
Then the Hamiltonian flow $\Phi_H^\epsilon$ of $H \in \mathcal C^\infty(T^*Q)$ at time $\epsilon \in J$ is given by: $$ \Phi_H^\epsilon(\zeta) = \text{d}_{\overline{q}} S_\epsilon (q, \overline{q}) \hspace{1cm} \hbox{ $\forall \zeta \in \mathcal V \subset T^* Q $}.$$
\end{theorem}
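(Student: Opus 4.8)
The plan is to read the two generating-function formulas, the one at the fixed time $\epsilon_0$ and the one at arbitrary time $\epsilon$, as an equality of \emph{Lagrangian submanifolds} in the product $T^*Q \times T^*Q$, and then to propagate the equality from $\epsilon_0$ to all of $J$ using the variation-form machinery of Section \ref{sec:smooth Lag}. Equip $T^*Q \times T^*Q$ with the symplectic form $\Omega := \mathrm{pr}_2^*\omega - \mathrm{pr}_1^*\omega$, where $\omega$ is the canonical symplectic form on $T^*Q$, so that graphs of symplectomorphisms are $\Omega$-Lagrangian. For each $t \in J$ set
\[
L_t := \left\{ \left(-\mathrm{d}_q S_t(q,\overline q),\ \mathrm{d}_{\overline q} S_t(q,\overline q)\right) : (q,\overline q)\in \mathcal U \right\}.
\]
The diffeomorphism hypothesis on $(q,\overline q)\mapsto \mathrm{d}_q S_t(q,\overline q)$ ensures $L_t$ is a genuine submanifold, and $L_t$ is $\Omega$-Lagrangian because it is the image, under the fibrewise sign flip $\sigma(\eta,\zeta)=(-\eta,\zeta)$ (which satisfies $\sigma^*\omega_{\mathrm{can}}=\Omega$ for the canonical form $\omega_{\mathrm{can}}$ on $T^*(Q\times Q)$), of the graph of the exact $1$-form $\mathrm{d} S_t$; graphs of closed $1$-forms are Lagrangian (Example \ref{1-forms}). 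Smooth dependence of $S_t$ on $t$ makes $(L_t)_{t\in J}$ a smooth family of Lagrangian submanifolds. With this dictionary the conclusion is exactly the equality $L_\epsilon = \mathrm{graph}\,\Phi_H^\epsilon$ for every $\epsilon\in J$, while the hypothesis at $\epsilon_0$ says precisely $L_{\epsilon_0}=\mathrm{graph}\,\Phi_H^{\epsilon_0}$.

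Next I would identify the reference family. The graph $\mathcal G_\epsilon := \mathrm{graph}\,\Phi_H^\epsilon$ is obtained from the diagonal $\Delta\subset T^*Q\times T^*Q$ by flowing the second factor, i.e. $\mathcal G_\epsilon = \phi^{\epsilon}_{\widetilde H}(\Delta)$ for the ambient Hamiltonian $\widetilde H := \mathrm{pr}_2^* H$, whose Hamiltonian flow fixes the first factor and acts by $\Phi_H^{\,\cdot}$ on the second. By Example \ref{Ham_flow}, $(\mathcal G_\epsilon)$ is a smooth family of Lagrangian submanifolds whose variation form is the restriction of $\mathrm{d}\widetilde H = \mathrm{pr}_2^*\,\mathrm{d}H$. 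More generally the $\widetilde H$-flow of \emph{any} Lagrangian initial datum carries this same variation form, so the normal variation of such a family is always the class of $X_{\widetilde H}$ in the normal bundle.

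The core of the argument is to compute the variation form of $(L_t)$ and match it with $\mathrm{pr}_2^*\,\mathrm{d}H$ (equivalently $-\mathrm{pr}_1^*\,\mathrm{d}H$). Here the Hamilton--Jacobi equation \eqref{HJ_usual} enters: differentiating the parametrization of $L_t$ in $t$ produces the normal variation, and by Example \ref{1-forms} the variation form of the graph of $\mathrm{d}S_t$ is the pullback of $\mathrm{d}(\partial_t S_t) = \mathrm{d}\big(H(\mathrm{d}_q S_t)\big)$; transporting through the symplectomorphism $\sigma$ (Example \ref{Symplecto}) expresses the variation form of $L_t$ itself. The content of the computation is that, after using \eqref{HJ_usual} and the Lagrangian identification $NL_t\cong T^*L_t$, this closed $1$-form coincides with the restriction of $\mathrm{pr}_2^*\,\mathrm{d}H$ to $L_t$; equivalently, the normal variation of $(L_t)$ is the class of $X_{\widetilde H}$. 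I expect this identity --- and the attendant bookkeeping of signs and of the two cotangent projections --- to be the main obstacle, since it is exactly the point where the shape of the equation $\partial_t S_t = H(\mathrm{d}_q S_t)$ is forced.

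Finally I would conclude by uniqueness. Both $(L_t)$ and $\big(\phi^{t-\epsilon_0}_{\widetilde H}(L_{\epsilon_0})\big)$ are smooth families of Lagrangian submanifolds with the same value at $t=\epsilon_0$ and the same normal variation (the class of $X_{\widetilde H}$); integrating the normal variation in the spirit of Lemma \ref{lem:normalvariation} --- a submanifold evolving with prescribed transverse velocity and fixed initial value being unique --- forces $L_t = \phi^{t-\epsilon_0}_{\widetilde H}(L_{\epsilon_0})$ for all $t\in J$. Since $\phi^{t-\epsilon_0}_{\widetilde H}$ acts on graphs by post-composing the second factor, $\phi^{\epsilon-\epsilon_0}_{\widetilde H}(\mathrm{graph}\,\Phi_H^{\epsilon_0}) = \mathrm{graph}(\Phi_H^{\epsilon-\epsilon_0}\circ \Phi_H^{\epsilon_0}) = \mathrm{graph}\,\Phi_H^{\epsilon}$, and the hypothesis $L_{\epsilon_0}=\mathrm{graph}\,\Phi_H^{\epsilon_0}$ turns the previous identity into $L_\epsilon = \mathrm{graph}\,\Phi_H^{\epsilon}$. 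Unwinding the dictionary, this is the desired formula $\Phi_H^\epsilon(\zeta)=\mathrm{d}_{\overline q}S_\epsilon(q,\overline q)$ with $\zeta=-\mathrm{d}_q S_\epsilon(q,\overline q)$.
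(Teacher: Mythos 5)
Your proposal is correct and takes essentially the same route as the paper: both reinterpret \eqref{HJ_usual} as the equality of variation forms of the two smooth families of Lagrangian submanifolds (the graph of $\Phi_H^t$ in $T^*Q \times T^*Q$ and the graph of $\mathrm{d}S_t$ in $T^*(Q\times Q)$) transported through the canonical symplectomorphism \eqref{isomorphism2} via Examples \ref{Ham_flow}, \ref{1-forms} and \ref{Symplecto}, and then propagate the coincidence at $\epsilon_0$ to all of $J$ --- your explicit uniqueness step via $\widetilde H = \mathrm{pr}_2^*H$ merely spells out what the paper leaves implicit in its closing sentence. One cosmetic remark: since $H\circ\Phi_H^t=H$, the variation form of the graph family equals $\mathrm{pr}_1^*\,\mathrm{d}H$ (not $-\mathrm{pr}_1^*\,\mathrm{d}H$) as well as $\mathrm{pr}_2^*\,\mathrm{d}H$, but this parenthetical sign slip plays no role in your argument, which matches against $\mathrm{pr}_2^*\,\mathrm{d}H$.
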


Quite often \cite{Marsden1998}, elements in $T^*Q$ are defined as pairs $(q,p)$ with $q \in Q$ and $p \in T^*_q Q $, and the map $\Phi_\epsilon^H\colon (q,p) \mapsto (\bar q,\bar p)$ above is then seen as being implicitly defined by: 
\begin{equation}
    \left\{
    \begin{array}{ll}
        \overline{q} &= \partial_q S_\epsilon(q,\overline{q})\\
        \overline{p} &= - \partial_{\overline{q}} S_\epsilon(q,\overline{q})
    \end{array}
    \right.
\end{equation}

Let us use the tools developped in subsection \ref{sec:smooth Lag} to give an interpretation and a proof of Theorem \ref{thm:HamiltonJacobi}.

Geometrically, two families of Lagrangian submanifolds are involved here:
\begin{enumerate}
    \item Since $\Phi_t^H\colon T^* Q \to T^*Q $ is a symplectomorphism, its graph $\underline{\Phi_H^t} = \{ (x, \phi^t_H(x)) \in T^*Q \times T^*Q, x \in T^*Q\}$, is a Lagrangian submanifold of $T^*Q \times T^*Q$ equipped with the product symplectic form with appropriate signs. As in \ref{Ham_flow}, variation forms of $(\underline{\Phi_H^t})_t$ are $(\xi_t)_t = (\Phi_H^{-t} {}^*\text{d}H)_t,$ where $\Phi_H^{-t}$ is understood as a map $\Phi_H^{-t}:\underline{\Phi_H^t} \to T^*Q.$
    \item For every $t \in J$, the graph of the exact form $\underline{\text{d}S_t}$ is a Lagrangian submanifold of $T^*(Q\times Q)$ equipped with its canonical symplectic form. As in \ref{1-forms}, variation forms of $(\underline{\text{d}S_t})_t$ are $ (\tilde{\xi_t})_t = (\tau_{|\text{d}S_t}^*\text{d} \frac{\partial}{\partial_t} S_t)_t,$ for $\tau$ the cotangent projection.
\end{enumerate}

Now, the two symplectic manifolds above are canonically symplectomorphic: 

\begin{equation}\label{isomorphism2}
    \begin{array}{cccc}
        \Psi \colon &  T^*Q \times T^*Q                   & \to     & T^*(Q \times Q) \\
                    &  (\zeta(q),  \bar \zeta (\bar q)) & \mapsto & \zeta(q) - \bar \zeta(\bar q)
    \end{array}.
\end{equation}
As in \ref{Symplecto}, both variation forms are related by $\Psi,$ hence:

\begin{equation}\label{eq:diff_class_Ham_Jac}
    \text{d}\partial_t S_t (q, \bar q) = \text{d} (\text{d}_q S_t)^*H.
\end{equation}
It is clear that a time-dependent constant can be added to generating functions $(S_t)_t$. \eqref{eq:diff_class_Ham_Jac} is Hamilton-Jacobi equation \eqref{HJ_usual} up to a time-dependent coboundary, that completes the proof of theorem \ref{thm:HamiltonJacobi}.

\indent As a conclusion, one geometric interpretation of Hamilton-Jacobi equation is that the canonical symplectomorphism \eqref{isomorphism2} above intertwines the two families of Lagrangian submanifolds $(\underline{\text{d}S_t})_t$ and $(\underline{\Phi_H^t})_t,$ and the resulting equation is the one of their variation forms.

\begin{remark}
 The solution $(S_t)_t$ of \eqref{HJ_usual} may be singular at $t=0,$ since the diagonal $\Delta = \{x,x\}_{x \in T^*Q}$ of $T^*Q \times T^*Q$ is sent by $\Psi$ to $\{x,-x\}_{x \in T^*Q}$ which is not the graph of a globally defined differential form on $Q \times Q.$
For instance, when $$H: T^*\mathbb{R}^d \to \mathbb{R}: (q,p) \mapsto V(q) + K(p)$$ is a separable fiberwise convex Hamiltonian and $f$ is the Legendre transform of $K,$ $\nabla f = (\nabla K)^{-1}$ and the symplectic Euler scheme $(q,p) \mapsto (\tilde{q},\tilde{p})$ can be rewritten as:
    \[
    \left\{
    \begin{array}{ccc}
        p =& \nabla f(\frac{q - \tilde{q}}{\epsilon}) + \epsilon \nabla V(q) &= \partial_q S_\epsilon(q,\tilde{q})\\
        \tilde{p} =& \nabla f(\frac{q-\tilde{q}}{\epsilon}) &= \partial_{\tilde{q}} S_\epsilon(q,\tilde{q})
    \end{array}
    \right.,
    \]
for $S_\epsilon(q,\tilde{q}) = \epsilon V(q) - \epsilon f(\frac{q - \tilde{q}}{\epsilon}).$ Since $K$ is convex, for any $(q,\bar q)$ outside the diagonal, $\lim_{\epsilon \to 0} S_\epsilon(q,\tilde{q}) = \infty.$\\
This have consequences because in numerical schemes, the step is a small number. For instance, in \cite{DeLeon2017}, where a formalism for fiberwise linear Poisson structures is developped, the authors get rid of the singularity by a local non-canonical change of coordinates.

Using an embedding of the local sympelctic groupoid in the cotangent bundle of the unit manifold, we will see in section \ref{sec:Ham_Jac_new} a different kind of Hamilton-Jacobi equation for which $S_0=0$.
\end{remark}

\begin{remark}
    In theorem \ref{thm:HamiltonJacobi}, one might replace $Q \times Q$ by a groupoid $G \rightrightarrows Q,$ $T^*Q$ by the dual $A^*$ of the Lie algebroid of $G$ and $T^*Q \times T^*Q$ by the cotangent groupoid $T^*G \rightrightarrows A^*$ of $G.$ This is based on the classical observation that $T^*G$ is a symplectic groupoid integrating the Poisson manifold $A^*.$  Then, one would obtain an analog of Theorem 7 in \cite{DeLeon2017}.
\end{remark}

\subsection{Symplectic groupoids}\label{sec:symp_grpid}

The Lagrangian submanifolds we are interested in lie in a neighborhood of the Poisson manifold in its symplectic groupoid, a framework that we introduce now, following \cite{Weinstein1987} (see \cite{Crainic2021} for a review on the subject).

\begin{definition}
Let $\groupoid$ be a Lie groupoid over $M.$ A symplectic 2-form $\Omega \in \Omega^2(\mathcal G)$  is said to be multiplicative if the graph of the product
$$\Big\{ (g_1, g_2, g_1 . g_2) \in \mathcal{G}^3, \alpha(g_2) = \beta(g_1) \Big\}$$
is Lagrangian for the symplectic form $\text{pr}_1^*\Omega + \text{pr}_2^*\Omega - \text{pr}_3^*\Omega$, where $\text{pr}_i : \mathcal G^3 \to \mathcal G$ is the projection over the $i$-th factor. The pair $(\groupoid,\Omega)$ is then called a symplectic groupoid.
\end{definition}

Let us recall some useful properties of symplectic groupoids.
\begin{enumerate}
    \item The unit manifold $M$ is a Lagrangian submanifold in $(\Gamma,\Omega)$ and comes equipped with a natural Poisson structure $\pi$ such that the source $\alpha\colon \Gamma 
    \to M$ is a Poisson map. Also the target $\beta\colon \Gamma\to M$ is an anti-Poisson map. 
    \item The Lie algebroid of $\Gamma$ is isomorphic to $T^*M$: its anchor is $\pi^\sharp
    \colon T^*M\to TM$. Its leaves are the symplectic leaves of $\pi$. Also, since a $1$-form  $\nu\in\Omega^1(M)$ is a section of the Lie algebroid, it defines a right-invariant vector field and a left-invariant vector field on $\groupoid$, respectively denoted by $\overrightarrow{\nu}$ and $\overleftarrow{\nu}$ and associated, under the isomorphism $\Omega^{\flat}\colon T\mathcal G\simeq T^* \mathcal G$, to the left and right invariant $1$-forms $\alpha^*\nu$ and $\beta^*\nu$.
    \end{enumerate}
Although not every Poisson manifold $(M,\pi)$ is the unit manifold of a symplectic groupoid, every Poisson manifold is the unit manifold of a local symplectic groupoid, see e.g. \cite{Crainic2011}, \cite{Zung2005}, said to \emph{integrate} $(M,\pi)$. Two local symplectic groupoids integrating the same Poisson manifold are isomorphic in a neighborhood of $M$.

The relation between the local symplectic groupoid of a Poisson manifold and  Poisson integrators comes from the following theorem about \emph{bisections}, i.e. submanifolds $L\subset \mathcal G$ to which the restrictions of both source and target maps are diffeomorphisms onto $M$. Notice that any bisection $L
\subset \mathcal G$ induces a diffeomorphism $\phi_L:=\beta_{|_L}
\circ \alpha_{|_L}^{-1}$ of the unit manifold $M$.

\begin{proposition}
 [\cite{Weinstein1987}]
Let $(M,\pi)$ be a Poisson manifold and $(\groupoid, \Omega)$ a local symplectic groupoid integrating it. 
If a bisection $L\subset \mathcal G$ is Lagrangian, then:
\begin{enumerate}
    \item the induced diffeomorphism $\phi_L\colon M\to M$ is a Poisson automorphism,
    \item provided that the fibers of the source map are connected, for all $x \in M,$ $\phi_L(x)$ and $x$ belong to the same symplectic leaf.
\end{enumerate}
\end{proposition}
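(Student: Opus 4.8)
The plan is to treat the two assertions separately: (1) will come from the interaction between the Lagrangian condition and the (anti-)Poisson nature of the structure maps, whereas (2) will use only the bisection property.

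For (1), I would first package the structure maps into a single map $(\alpha,\beta)\colon \mathcal G \to M \times \bar M$, where $\bar M$ denotes $M$ equipped with the opposite Poisson structure $-\pi$ and $M\times\bar M$ carries $\pi\oplus(-\pi)$, and I claim this map is Poisson. Testing on the generators $\alpha^* f$ and $\beta^* g$ of $(\alpha,\beta)^*\mathcal{C}^\infty(M\times\bar M)$, this reduces to the three identities $\{\alpha^* f,\alpha^* f'\}=\alpha^*\{f,f'\}$, $\{\beta^* g,\beta^* g'\}=-\beta^*\{g,g'\}$ and $\{\alpha^* f,\beta^* g\}=0$: the first two are precisely the facts, recalled among the properties of symplectic groupoids, that $\alpha$ is Poisson and $\beta$ anti-Poisson, while the third is the standard fact that source and target fibres are $\Omega$-orthogonal (equivalently $\Omega(X_{\alpha^* f},X_{\beta^* g})=0$). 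Since $\alpha_{|_L}$ is a diffeomorphism onto $M$ and $\beta_{|_L}=\phi_L\circ\alpha_{|_L}$, the map $(\alpha,\beta)$ carries $L$ diffeomorphically onto $\{(x,\phi_L(x)) : x\in M\}$, i.e.\ onto the graph of $\phi_L$ inside $M\times\bar M$.

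The heart of the argument is the pointwise observation that a Poisson map sends a Lagrangian submanifold to a coisotropic one. At $\ell\in L$ with image $(x,\phi_L(x))$, a covector $\xi$ annihilating the tangent space to the graph pulls back, under the transpose of $\mathrm d(\alpha,\beta)_\ell$, to a covector in the annihilator of $T_\ell L$; because $L$ is Lagrangian, $\Omega^\sharp$ sends this annihilator back into $T_\ell L$, and the defining relation of a Poisson map, $(\pi\oplus(-\pi))^\sharp\xi=\mathrm d(\alpha,\beta)_\ell\Bigl(\Omega^\sharp\bigl((\mathrm d(\alpha,\beta)_\ell)^{*}\xi\bigr)\Bigr)$, then places $(\pi\oplus(-\pi))^\sharp\xi$ inside the tangent space to the graph. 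Hence the graph is coisotropic, which is exactly the statement that $\phi_L\colon (M,\pi)\to(M,\pi)$ is a Poisson map; being a diffeomorphism, it is a Poisson automorphism. I expect the main difficulty to be bookkeeping rather than conceptual: one must pin down the sign conventions (which factor is barred) so that the coisotropic graph yields \emph{Poisson} and not \emph{anti-Poisson}, and one must justify the $\Omega$-orthogonality of the source and target fibres if it is not taken as given.

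For (2), no symplectic input beyond the bisection property is needed. Writing $b:=\alpha_{|_L}^{-1}\colon M\to\mathcal G$, the element $b(x)\in L$ is an arrow with $\alpha(b(x))=x$ and $\beta(b(x))=\phi_L(x)$; thus $b(x)$ lies in the source fibre $\alpha^{-1}(x)$ and exhibits $\phi_L(x)$ as a point of the groupoid orbit $\beta(\alpha^{-1}(x))$ of $x$. Since the Lie algebroid of $\mathcal G$ is $T^*M$ with anchor $\pi^\sharp$, whose orbits are the symplectic leaves, and since the source fibres are assumed connected, $\beta(\alpha^{-1}(x))$ is connected and coincides with the symplectic leaf through $x$. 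Therefore $x$ and $\phi_L(x)$ lie on the same symplectic leaf, which completes the proof.
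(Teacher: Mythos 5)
Your proof is correct, and there is in fact no internal proof to compare it against: the paper states this proposition with a citation to \cite{Weinstein1987} and gives no argument of its own. Your route --- realize the graph of $\phi_L$ as the image of the Lagrangian $L$ under the Poisson map $(\alpha,\beta)\colon \mathcal G \to M\times \bar M$, check pointwise that such an image is coisotropic, and invoke the coisotropic-graph characterization of Poisson maps --- is a clean instance of Weinstein's coisotropic calculus; the classical argument in the cited reference instead goes through left translations: $L$ Lagrangian is equivalent to $g\mapsto L\cdot g$ being a (local) symplectomorphism of $\mathcal G$, and since this translation intertwines $\beta$ with $\phi_L\circ\beta$ and $\beta$ is an anti-Poisson surjective submersion, $\phi_L$ is Poisson. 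Both arguments rest on exactly the three structure facts you isolate ($\alpha$ Poisson, $\beta$ anti-Poisson, and $\{\alpha^*f,\beta^*g\}=0$, the last because $\ker d\alpha$ and $\ker d\beta$ are $\Omega$-orthogonal complements, so $X_{\alpha^*f}$ is tangent to the $\beta$-fibres); all three hold verbatim for local symplectic groupoids, so no adjustment is needed there, and your sign bookkeeping is consistent: a covector $(\eta,\zeta)$ annihilating the graph satisfies $\eta=-(d\phi_L)^*\zeta$, and coisotropy in $(M,\pi)\times(M,-\pi)$ then yields $\pi^\sharp\zeta = d\phi_L\bigl(\pi^\sharp (d\phi_L)^*\zeta\bigr)$, i.e.\ $\phi_L$ Poisson rather than anti-Poisson. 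You also correctly secure the one hypothesis the ``Lagrangian maps to coisotropic'' step needs, namely that $d(\alpha,\beta)$ carries $T_\ell L$ onto the tangent space of the image, since $(\alpha,\beta)|_L$ is a diffeomorphism onto the graph. One small overstatement in item (2): for a \emph{local} groupoid, $\beta(\alpha^{-1}(x))$ need not \emph{coincide} with the symplectic leaf through $x$ (the orbit may be a proper subset), but coincidence is not needed --- inclusion suffices, and it follows by joining $b(x)$ to the unit $1_x$ inside the connected source fibre and pushing the path forward by $\beta$: since $d\beta(\ker d\alpha)=\pi^\sharp(T^*M)$, the image path is tangent to the characteristic distribution and therefore remains in the leaf through $x$.
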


\noindent
We are now interested in smooth families of Lagrangian
submanifolds $(L_t)_{t \in I} $ of a symplectic groupoid $\mathcal G $, where $I \subset \mathbb R $ is an interval containing $0$, that happen to be bisections for all $t \in I$.
From now on, such an $ (L_t)_{t \in I}$ shall be refered to as a \emph{smooth family of Lagrangian bisections}.

\begin{example}[Lagrangian bisections of the symplectic groupoid of a symplectic manifold]\label{ex:symp_manifold}
Any smooth family of symplectomorphism $(\phi_t)_t$ of a symplectic manifold $(M, \omega)$ is the flow of a time-dependent vector field related by $\omega$ to a time-dependent closed form $(\xi_t).$ Consequently, any smooth family of Lagrangian bisection $(L_\epsilon)_{\epsilon \in I}$ will be, up to a choice of the first and the second factor in $M \times M,$ of the form $\{(x,\phi_\epsilon(x)) , x \in M\}_{\epsilon \in I}.$

For instance, for any solution $S \in \mathcal{C}^{\tiny \infty}(I \times Q \times Q)$ of Hamilton-Jacobi equation as in section \ref{sec:classic_Ham_Jac}, a smooth family of Lagrangian bisections of the pair groupoid is given by $\Psi(\underline{\text{d}S_t})_t$ where
$$\Psi(\underline{\text{d}S_t}) = \{ \left(\text{d}_q S_t(q, \bar q),
-\text{d}_{\bar q}  S_t(q, \bar q) \right), (q, \bar q) \in Q \times Q \} \subset T^*Q \times T^*Q.$$
\end{example}

\begin{example}[Lagrangian bisections of the symplectic groupoid of the dual of a Lie algebra]
Associate $T^*G$ with $G \times \mathfrak{g}^*$ by left translations. It integrates the canonical Poisson structure on $\mathfrak{g}^*,$ where the source is the projection on $\mathfrak{g}^*.$ Then any smooth family of Lagrangian bisections is of the form $$\forall t \text{, } L_t \stackrel{\text{def}}{=} \{ (\rho_t(p), p) \in T^*G, p \in \mathfrak{g}^*\}$$ where $\rho$ is a smooth family of sections of the source such that 
$$ L_{(\rho_t)^{-1}} {}_* \text{d}_x \rho_t: \mathfrak{g}^* \rightarrow \mathfrak{g}$$
is symmetric for the dual pairing for all $x \in \mathfrak{g}^*.$
The corresponding Poisson automorphism is $ p \in \mathfrak{g}^* \mapsto \text{Ad}^*_{\rho_t(p)}.p $
\end{example}

\begin{remark}\label{rk:Ham_Poisson_integrator}
Any exact family of Lagrangian bisections $L$ induces naturally a Hamiltonian Poisson integrator of timestep $\Delta t$
$$ \begin{array}{ll}
 M &\longrightarrow M \\
 x &\longmapsto \beta \circ (\alpha_{|L_{\Delta t}})^{-1}(x)
\end{array}$$
in the sense of the definition \ref{def:ham_pois_int} below.

This procedure allows to construct many Poisson automorphisms that not only stay in the same symplectic leaf when we iterate them but also are Hamiltonian trajectories. This is a natural property to ask to a Poisson scheme. The reader may notice that given a Hamiltonian $H$ on $M,$ one does not know its flow and the bisections $L_t = \Phi^t_{\overrightarrow{H}}(M)$ are consequently not generically computable. In section \ref{sec:Ham_Jac}, we explain how Hamilton-Jacobi equation on the symplectic groupoid produces Lagrangian bisections such that the induced Poisson integrator approximates a Hamiltonian flow at any desired order in the timestep.
\end{remark}

By Lemma \ref{lem_closed}, the variation form is a closed $1$-form on $L_t $ for all $t \in I$. Using $(\alpha_{|_{L_t}}^{-1})^*\colon \Omega^1 (L_t) \to \Omega^1 (M)  $, the variation $1$-forms of $(L_t)_{t \in I}$ become a smooth family $(\xi_t )_{t \in I} $ of closed $1$-forms in $\Omega^1(M)$, that we still call \emph{the variation $1$-forms of $(L_t)_{t \in I} $}, with a slight abuse of notation. Before stating the proposition that relates $\xi$ and $L$: 

\begin{lemma} \label{lem:11corresp}
Let $(L_t)_{t \in I}$ and $(\xi_t )_{t \in I} $ be as above. 
The time $t$-flow of the time dependent vector field $\overrightarrow{\xi_t }=(\Omega^{\flat})^{-1}(\alpha^* \xi_t)$ restricts to a diffeomorphism from $ L_0$ to $L_{t} $.
\end{lemma}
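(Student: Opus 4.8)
The plan is to show that the time-dependent flow transports $L_0$ onto $L_t$ by proving that the associated \emph{suspended} vector field is tangent to the total space $L_I = \{(x,t): x \in L_t\}\subset \mathcal G \times I$. Concretely, consider on $\mathcal G \times I$ the autonomous field $\widehat X(x,t)=\bigl(\overrightarrow{\xi_t}(x),\partial_t\bigr)$, whose integral curves are exactly $\tau \mapsto (\gamma(\tau),t_0+\tau)$ for $\gamma$ an integral curve of the time-dependent field $\overrightarrow{\xi_\bullet}$. If $\widehat X$ is tangent to $L_I$, then $L_I$ is invariant under its flow, and reading off the first component of the integral curve through $(x,0)$ with $x\in L_0$ yields $\Phi^t_{\overrightarrow{\xi_\bullet}}(x)\in L_t$, which is what I want. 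Using the description of $(L_t)_{t\in I}$-paths from the definition preceding Lemma \ref{lem:normalvariation}, the tangent space $T_{(x,t)}L_I$ consists of the vectors $(u+a\,w_x,\,a)$ with $u\in T_xL_t$, $a\in\mathbb R$, and $w_x$ any representative of the normal variation $\left[\tfrac{\partial L_t}{\partial t}(x)\right]$. Hence tangency $\widehat X(x,t)\in T_{(x,t)}L_I$ is equivalent to the single pointwise statement
$$\overrightarrow{\xi_t}(x)-w_x\in T_xL_t \qquad \text{for all } t\in I,\ x\in L_t,$$
i.e. to $\overrightarrow{\xi_t}(x)$ representing the normal variation of $(L_t)_{t\in I}$ at $x$.

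So the lemma reduces to this pointwise identity, which I verify directly. Writing $\xi_t^L\in\Omega^1(L_t)$ for the intrinsic variation form of \eqref{eq:glue_1forms}, and recalling that, since $L_t$ is a bisection, $\xi_t=(\alpha_{|_{L_t}}^{-1})^*\xi_t^L$ is its push-forward to $M$, I compute for $u\in T_xL_t$, using the defining relation $\iota_{\overrightarrow{\xi_t}}\Omega=\alpha^*\xi_t$ and the same sign convention $\Omega^\flat(X)=\iota_X\Omega$ underlying \eqref{eq:glue_1forms}:
\begin{align*}
\Omega\bigl(\overrightarrow{\xi_t}(x),u\bigr)
&= (\alpha^*\xi_t)(u) = (\xi_t)_{\alpha(x)}\bigl(d_x\alpha_{|_{L_t}}(u)\bigr)\\
&= (\xi_t^L)_x\bigl(d(\alpha_{|_{L_t}}^{-1})\circ d\alpha_{|_{L_t}}(u)\bigr) = (\xi_t^L)_x(u),
\end{align*}
where the third equality is the definition of the push-forward and I used that $\alpha_{|_{L_t}}$ is a diffeomorphism. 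By \eqref{eq:glue_1forms}, $(\xi_t^L)_x(u)=\Omega(w_x,u)$ for every representative $w_x$ of the normal variation; hence $\Omega(\overrightarrow{\xi_t}(x)-w_x,u)=0$ for all $u\in T_xL_t$. Since $L_t$ is Lagrangian, $(T_xL_t)^{\perp_\Omega}=T_xL_t$, and this orthogonality forces $\overrightarrow{\xi_t}(x)-w_x\in T_xL_t$, the required identity (the matching signs are what single out $L_t$ rather than $L_{-t}$).

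With tangency established, $L_I$ is invariant under the locally defined flow of $\widehat X$, so $\Phi^t_{\overrightarrow{\xi_\bullet}}(L_0)\subseteq L_t$ wherever the flow is defined; applying the same argument to the reverse flow gives the opposite inclusion, whence $\Phi^t_{\overrightarrow{\xi_\bullet}}(L_0)=L_t$. As $\Phi^t_{\overrightarrow{\xi_\bullet}}$ is a (local) diffeomorphism of $\mathcal G$, its restriction $L_0\to L_t$ is a diffeomorphism, as claimed. I expect the only genuine subtlety to be the bookkeeping in the displayed computation — keeping straight the three incarnations of the variation form ($\xi_t^L$ on $L_t$, its push-forward $\xi_t$ on $M$, and the pullback $\alpha^*\xi_t$ on $\mathcal G$) and invoking the bisection hypothesis precisely where $\alpha_{|_{L_t}}$ must be inverted; the independence from the choice of $w_x$ and the existence of the flow on the relevant domain are routine once the Lagrangian orthogonality is in hand.
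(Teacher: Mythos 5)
Your proof is correct, and it makes explicit the one step the paper compresses to almost nothing. The paper's own proof consists of two inclusions: the forward one is asserted outright (``by definition of variation forms, $\Phi^t_{\overrightarrow{\xi}}(L_0)\subset L_t$''), and the reverse one is obtained by flowing a point $x\in L_t$ backward and invoking a local-in-time statement that the flow ``is locally an $(L_t)$-path'', again credited to \eqref{eq:glue_1forms} without computation. Your displayed chain of equalities --- $\Omega(\overrightarrow{\xi_t}(x),u)=(\alpha^*\xi_t)(u)=(\xi_t^L)_x(u)=\Omega(w_x,u)$ for $u\in T_xL_t$, followed by Lagrangian self-orthogonality to conclude $\overrightarrow{\xi_t}(x)-w_x\in T_xL_t$ --- is precisely the content hidden behind those assertions, and it is worth noting that this is where the Lagrangian hypothesis actually enters (the paper never names it in the proof; it is buried in the identification $NL_t\simeq T^*L_t$ underlying \eqref{eq:glue_1forms}) and where the bisection hypothesis is used to invert $\alpha_{|_{L_t}}$. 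Your packaging also differs: you suspend to an autonomous field on $\mathcal G\times I$ and reduce everything to tangency to $L_I$, whose tangent space you correctly describe via Lemma \ref{lem:normalvariation}, getting both inclusions symmetrically from invariance under the forward and reverse flows; the paper instead runs the two inclusions separately with an $\epsilon$-quantifier continuity argument for the reverse one. The suspension buys a cleaner and more uniform argument at the cost of introducing $T_{(x,t)}L_I$. One caveat applies equally to both proofs: upgrading ``locally an $L$-path'' (respectively, pointwise tangency) to the global statement requires that the relevant integral curves be defined on all of $[0,t]$ and not exit $L_I$ through its edge --- the paper simply asserts completeness of $\overrightarrow{\xi}$ on $I$, while you hedge with ``wherever the flow is defined'' and flag the point as routine; since the lemma is stated under the standing hypothesis that the family $(L_t)_{t\in I}$ exists and the $t$-flow makes sense, this is an acceptable shared assumption rather than a gap specific to your argument.
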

\begin{proof}
By definition of variation forms, $\Phi_{\overrightarrow{\xi}}^t(L_0) \subset L_t.$\\
In order to prove the other inclusion, let $x \in L_{t}$ and set $x_0 = (\Phi^t_{\overrightarrow{\xi}})^{-1}(x).$ We are left to prove that $x_0 \in L_0.$ Since $\overrightarrow \xi$ is complete on $I$ and \eqref{eq:glue_1forms} ensures that the flow $\Phi_{\overrightarrow \xi}$ is locally an $L$-path:

$$ \forall \; 0 \leq u \leq t, \exists \; \epsilon_0 > 0, \forall \; | \epsilon | \leq \epsilon_0, \Phi_{\overrightarrow{\xi}}^{u-\epsilon}(x_0) \in L_{u-\epsilon} $$
For any $u \in [0,t],$ $(\Phi^u_{\overrightarrow \xi})^{-1}(x) \in L_u,$ hence the result.
\end{proof}

Lemma \ref{lem:11corresp} says that smooth families  $(L_t)_{t \in I} $ of Lagrangian bisections in a symplectic groupoid can be recovered from $L_0 $ and from their associated variation forms $(\xi_t) \in Z^1(M)$. Not every pair $(L_0,(\xi_t))$ gives a family of Lagrangian bisections, because the flow of $\overrightarrow{\xi_t}$ may not be defined for all times $t\in I.$ However, the correspondence works under relatively mild assumptions:

\begin{proposition}
\label{prop:11corresp}
Let $I$ be an interval containing 0. In a symplectic groupoid $\groupoid$, there is a one-to-one correspondence between:
\begin{enumerate}[label=(\roman*)]
    \item smooth families of Lagrangian bisections $(L_t)_{t \in I}$ of $ (\mathcal G, \Omega)$,
    \item pairs made of a Lagrangian bisection $L_0 $ and a smooth family of closed one-forms on the base $(\xi_t)_{t \in I}$ such that the vector field  $\pi^\#(\xi_t)$ is a complete vector field on $M$.
\end{enumerate}
\end{proposition}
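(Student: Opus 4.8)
The plan is to exhibit the bijection explicitly and reduce everything to Lemma \ref{lem:11corresp}, the only genuinely new analytic input being a completeness statement. Define the map $F$ from (i) to (ii) by sending $(L_t)_{t\in I}$ to the pair consisting of $L_0$ together with its variation $1$-forms $(\xi_t)_{t\in I}$, transported to $M$ via $(\alpha|_{L_t}^{-1})^*$ as in the paragraph preceding the statement; these are closed by Lemma \ref{lem_closed}. Define the inverse candidate $G$ from (ii) to (i) by $G(L_0,(\xi_t)) := (\Phi^{t}_{\overrightarrow{\xi}}(L_0))_{t\in I}$, where $\overrightarrow{\xi_t} = (\Omega^{\flat})^{-1}(\alpha^*\xi_t)$. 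The three facts I would use about this right-invariant field are: it is the Hamiltonian-type field $X_{\alpha^*\xi_t}$, hence \emph{symplectic} because $\xi_t$ is closed; it is $\alpha$-projectable with $\alpha_*\overrightarrow{\xi_t} = \pi^{\#}\xi_t$, because $\alpha$ is a Poisson map; and it is tangent to the $\beta$-fibers, because the $\alpha$- and $\beta$-pullbacks Poisson-commute, a standard property of symplectic groupoids (for exact $\xi_t = \mathrm d p$ one has $X_{\alpha^* p}(\beta^* g) = \{\alpha^* p,\beta^* g\} = 0$).

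Granting the completeness input discussed below, I would first check that $G$ lands in (i). Since $\Phi^{t}_{\overrightarrow{\xi}}$ preserves $\Omega$ and $L_0$ is Lagrangian, each $L_t$ is Lagrangian; since $\overrightarrow{\xi_t}$ is tangent to the $\beta$-fibers and $\beta|_{L_0}$ is a diffeomorphism onto $M$, the same holds for each $L_t$, while $\alpha|_{L_t}$ is a diffeomorphism because along the flow $\alpha$ is carried by $\Psi^t$, the flow of $\pi^{\#}\xi_t$, so that $\alpha|_{L_t} = \Psi^t\circ\alpha|_{L_0}\circ(\Phi^{t}_{\overrightarrow{\xi}}|_{L_0})^{-1}$; smoothness of the family is the smoothness of $(x,t)\mapsto(\Phi^{t}_{\overrightarrow{\xi}}(x),t)$. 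That $F$ and $G$ are mutually inverse then splits in two. The identity $G\circ F = \mathrm{id}$ is exactly the content of Lemma \ref{lem:11corresp}, which says the flow of $\overrightarrow{\xi_t}$ carries $L_0$ onto $L_t$. The identity $F\circ G = \mathrm{id}$ is the computation that the variation form of $(\Phi^{t}_{\overrightarrow{\xi}}(L_0))_t$ is $\xi_t$: the $L$-path through $x\in L_t$ is $s\mapsto \Phi^{s}_{\overrightarrow{\xi}}((\Phi^{t}_{\overrightarrow{\xi}})^{-1}(x))$, so its normal variation is the class of $\overrightarrow{\xi_t}(x)$, and \eqref{eq:glue_1forms} gives, for $u\in T_xL_t$, the value $\Omega(\overrightarrow{\xi_t}(x),u)=(\alpha^*\xi_t)(u)$; transporting by $(\alpha|_{L_t}^{-1})^*$ returns $\xi_t$.

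It remains to address completeness, which enters in both directions. For $F$ to land in the target one needs $\pi^{\#}\xi_t$ complete, and this is the easy direction: if $(L_t)$ exists for all $t\in I$, then by Lemma \ref{lem:11corresp} $\Phi^{t}_{\overrightarrow{\xi}}$ is defined on $L_0$ for all $t\in I$, and applying $\alpha$ (which intertwines this flow with $\Psi^t$) together with the bijectivity of $\alpha|_{L_0}$ shows $\Psi^t$ is defined on all of $M$ for all $t\in I$, i.e. $\pi^{\#}\xi_t$ is complete.

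The hard part, and the only real obstacle, is the converse required for $G$: that completeness of the base field $\pi^{\#}\xi_t$ on $M$ forces $\Phi^{t}_{\overrightarrow{\xi}}$ to be defined on $L_0$ (equivalently on all of $\mathcal G$) for every $t\in I$. Mere $\alpha$-projectability does not suffice, since a field covering a complete one can escape to infinity along the fibers, so I would use right-invariance decisively: the flow is recovered from its restriction to the units by left translation, $\Phi^{t}_{\overrightarrow{\xi}}(g) = \Phi^{t}_{\overrightarrow{\xi}}(\mathbf 1_{\beta(g)})\cdot g$, whence the maximal existence time depends only on $\beta(g)$ and descends to a function $\tau$ on $M$. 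This $\tau$ obeys a cocycle relation under $\Psi$ which, combined with the uniform local lower bound on existence time along the compact trajectory $\{\Psi^s(m) : s\in[0,t]\}$, excludes finite-time blow-up; a finite $\tau(m)$ would contradict maximality. This is precisely the classical equivalence between completeness of a section of a Lie algebroid and completeness of its anchor image, which I would either cite or spell out along these lines, and it is where all the analytic content of the statement is concentrated.
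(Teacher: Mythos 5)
Your proposal is correct, and its skeleton is the same as the paper's: uniqueness (your identity $G\circ F=\mathrm{id}$) is read off from Lemma \ref{lem:11corresp}, and the inverse is constructed by flowing $L_0$ along the time-dependent right-invariant field $\overrightarrow{\xi_t}=(\Omega^{\flat})^{-1}(\alpha^*\xi_t)$. You diverge from the paper at two points. First, for the Lagrangian property the paper passes to the groupoid Poisson tensor $\Pi$ and uses its multiplicativity to compute $\mathscr{L}_{\overrightarrow{\xi_\epsilon}}\Pi=\overrightarrow{\mathrm{d}\xi_\epsilon}=0$; you instead observe that $\overrightarrow{\xi_t}$ is $\Omega$-dual to the closed form $\alpha^*\xi_t$, hence locally Hamiltonian, so its flow is symplectic by Cartan's formula. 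Your route is more elementary and avoids multiplicativity altogether; what the paper's computation buys is the stronger ``if and only if'' (flow symplectic exactly when the base form is closed) and the link to multiplicative polyvector fields recorded in the remark citing \cite{CLG2012}, neither of which is needed for the proposition itself. Second, the paper declares the smoothness and bisection properties ``clear'' and, more significantly, never justifies the one step where the completeness hypothesis enters: that completeness of $\pi^{\#}(\xi_t)$ on $M$ forces the flow of $\overrightarrow{\xi_t}$ to exist on $L_0$ for all $t\in I$ --- exactly the failure mode acknowledged in the remark preceding the proposition. Your argument (the flow is recovered from its restriction to the units by groupoid translation, so the maximal existence time descends to the base, and a uniform lower bound on existence time along the compact base trajectory rules out finite-time blow-up) is the standard proof that an invariant vector field on a Lie groupoid is complete if and only if its anchor image is, and it supplies precisely the analytic content the paper leaves implicit; note that it uses the \emph{global} multiplication, so it is consistent with the proposition being stated for a genuine symplectic groupoid rather than the local ones used elsewhere in the paper, where only short-time statements would survive. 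One caution on the hypothesis itself: both your hard-direction argument (which needs the base trajectory $\{\Psi^s(m)\,:\,s\in[0,t]\}$ to exist and be compact) and the intended statement really require completeness of the \emph{non-autonomous} flow of $(\pi^{\#}\xi_t)_{t\in I}$; completeness of each frozen-time field $\pi^{\#}(\xi_t)$ alone does not in general guarantee this, so you should state that reading explicitly when you invoke the cocycle argument.
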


\begin{remark}
A smooth family of Lagrangian bisections $(L_t)_{t \in I}$ is exact if and only if there exist global $L$-paths that are left-invariant time-dependant Hamiltonian trajectories :
$$ \exists \; H \in \mathcal{C}^\infty(M \times \mathbb{R}), \forall \; t_0 \in I, \forall \; x \in L_{t_0}, \exists \; \gamma \text{ an L-path, } \left\{
\begin{array}{ll}
    \gamma(t_0) = x\\
    \dot{\gamma} = X_{\alpha^* H}
\end{array}
\right.$$
\end{remark}

\begin{proof}[Proof of Proposition \ref{prop:11corresp}]

From lemma \ref{lem:11corresp}, for any $\epsilon \in I,$
\begin{equation}
   L_\epsilon = \phi^\epsilon_{(\overrightarrow{\xi_t})_t}(L_0).
\end{equation}
Consequently, two smooth families of Lagrangian bisections admitting the same variation forms and corresponding at $0$ are equals.

Now, set $L = (\phi^\epsilon_{(\overrightarrow{\xi_t})_t}(L_0))_\epsilon.$ The smothness and bisection properties are clear. We prove that $L_\epsilon$ is a Lagrangian submanifold. \\
Indeed, the flow of a left-invariant vector field is a symplectomorphism if and only if the corresponding 1-form on the base is closed. To verify this claim, set $\Pi \in \Gamma(\wedge^2(\mathcal{G}))$ the Poisson tensor corresponding to $\Omega.$ For any $f,g \in \mathcal{C}^\infty(M)$:

\begin{align*}
    \mathscr{L}_{\overrightarrow{f.\text{d}g}} \Pi &= \alpha^* f \mathscr{L}_{\overrightarrow{\text{d}g}} \Pi + \overrightarrow{\text{d}f} \wedge \overrightarrow{\text{d}g}\\
                                                     &= \overrightarrow{\text{d}f \wedge \text{d}g}\\
                                                     &= \overrightarrow{\text{d}(f \text{d}g )}.
\end{align*}
Then :
\begin{equation}\label{mult}
    \mathscr{L}_{\overrightarrow{\xi_\epsilon}} \Pi = \overrightarrow{\text{d}\xi_\epsilon} = 0
\end{equation}
and that concludes the proof.
\end{proof}

\begin{remark}
Equation \eqref{mult} comes out from the multiplicativity of $\Pi$ and is a particular case of a much more general correspondence between multiplicative polyvector fields on the groupoid and differentials on its algebroid, cf. theorem 2.34 of \cite{CLG2012}.
\end{remark}

\noindent
Examples will be given in Section \ref{sec:Pois_integ}, except for the following two examples, that connect with symplectic geometry.

\begin{example}
The example \ref{ex:symp_manifold} already relates smooth family of closed 1-forms on a symplectic manifold with smooth family of Lagrangian bisections of the associated pair groupoid.
\end{example}

\begin{example}\label{ex:Weinsteins_embedding}
According to Weinstein's theorem \cite{Weinstein1971}, every Poisson manifold $(M,\pi)$ integrates to a local symplectic groupoid structure $(\groupoid, \omega) $ where $\mathcal G $ is a neighborhood $\mathfrak U(M) $ of the zero section of $T^* M $ and $\omega=\omega_{can} $ is the
restriction to $\mathfrak U(M) $ of the
canonical symplectic $2$-form of $T^* M $. To every smooth family $(L_t) $ of Lagrangian bisections with $L_0=M $ and $L_t \subset \mathfrak U(M) $, we can therefore associate two different kinds of families of closed $1$-forms.
\begin{enumerate}
    \item We can forget the groupoid structure, and say that on $T^*M $, each one of the $L_t $ is the graph of a closed $1$-form:  
 $$ L_t = \{ \zeta_t|_m , m\in M\} $$
for $\xi_t $ a closed 1-form on $M.$
\item Alternatively, one can forget that $\mathcal G$ has been identified to $T^* M$, and use Proposition \ref{prop:11corresp} to associate a family $\xi_t $ of closed $1$-forms. 
\end{enumerate}
Both families of closed $1$-forms are in general different, but related by the equality of the variation forms of their corresponding families of Lagrangian submanifolds. As 1-forms on $L_t$ for all $t$:
$$  \alpha^{*} \xi_t = \tau^* \frac{d \zeta_t}{dt} $$
where $\tau \colon T^*M \to M $ is the natural projection.
\end{example}

\subsection{Poisson integrators and their backward analysis}\label{sec:back_analysis}

Poisson integrators appearing in the literature may be understood as germs of Lagrangian bisections. A particular case of this principle is developped for fiberwise linear Poisson structures on the dual of a Lie algebroid in \cite{DeLeon2017}.

Let us consider a Hamiltonian vector field, i.e. a differential equation of the type
$$ \dot{x} = \pi_{x(t)}^{\#} (d_{x(t)} H) =  X_{H}|_{x(t)}  $$
where $(M,\pi) $ is a Poisson manifold and $H \in \mathcal{C}^\infty(M)$ a Hamiltonian function. Out of \cite{Hairer2006}, a reasonable definition of a \emph{Poisson integrator of order $k\geq 1 $ for $H$} might be defined by the following three conditions :
\begin{enumerate}
    \item $\phi_\epsilon$ agrees with the time-$\epsilon $ flow of $X_H $ up to order $k$ in $\epsilon $,
    \item $\phi_\epsilon$ is a Poisson diffeomorphism for all $\epsilon \in I$,
    \item $\phi_\epsilon$ maps each leaf to itself (through a map which is necessarily a symplectic diffeomorphism).
\end{enumerate}

The purpose of a Poisson integrator is to choose a particular value ${\Delta t}$ of $\epsilon$, called \emph{timestep}, then consider the iterations of the diffeomorphism $\phi_{\Delta t}$. The hope is of course that the $n$-th iterations remain good approximations of the flow of $X_H $ at time $n\Delta t$ for large $n$.

In the particular case of symplectic integrators, the theoretical ground of their good behaviour is their backward analysis. Indeed, any smooth family of symplectomorphisms $(\phi_t)_t$ is the flow of a time-dependent vector field $(X_t)_t$ related through the symplectic form to the flow of a closed 1-form. So any symplectic integrator for $H$ at order $k$ is locally the flow of a time-dependent Hamiltonian $(h_t)_t$ such that $h_0 = H.$ The order $k$ of the method is then related to the order at which the initial Hamiltonian $H$ equals $(h_t)_t$: $H = h_t + \smallO{t^{k-1}}.$

In this context, an important feature of Poisson integrators is that it is not always true anymore. There exists a smooth family of Poisson automorphisms, even staying on the same symplectic leaf, that are not a flow of a time-dependent Hamiltonian, because of so-called outer-automorphisms. They are measured by the first Poisson cohomology group of the Poisson manifold. This makes a huge difference with symplectic schemes, for which this property is automatically verified, at least locally. Here is an example of such a phenomenon:

\begin{example}\label{ex:counter-ex}
For the Poisson tensor $\pi = (x^2 +y^2)\partial_x \wedge \partial_y$ on $\mathbb{R}^2$, a Poisson integrator for $H : (x,y) \to \frac{x^2 + y^2}{2}$ of order $k$ and step $\Delta t$ is

\begin{equation}
    \begin{pmatrix}
        x_{n+1}\\
        y_{n+1}
    \end{pmatrix}
    = e^{(\Delta t)^k}
    \begin{pmatrix}
    \cos{\Delta t} & -\sin{\Delta t}\\
    \sin{\Delta t} & \cos{\Delta t}
    \end{pmatrix}
    .
    \begin{pmatrix}
        x_{n}\\
        y_{n}
    \end{pmatrix}
\end{equation}
and behaves remarkably bad for long simulations: for any norm $\|.\|$ and initial point $(x_0,y_0) \neq 0_{\mathbb{R}^2},$

\begin{equation*}
    \| (x_n,y_n) - \Phi^{n\Delta t}_H(x_0,y_0) \| \underset{n \to +\infty}{\longrightarrow} +\infty.
\end{equation*}
As in the general case, this phenomenon is explained by the fact that the first Poisson cohomology group $H^1_\pi$ is locally non-trivial around $0$ : there exists no neighborhood $U$ of $0$ such that $H^1_\pi(U) = \{0\}.$ Indeed, $H^1_\pi$ is generated by rotations and dilations. In other words, there exist smooth families of Poisson automorphisms $(\phi_t)_t$ such that $\phi_0 = \text{Id}$ but $\phi$ is not a flow of a time-dependent Hamiltonian.  This issue comes from the singularity of $\pi$ at $0$.
\end{example}

The previous example suggests to make stronger assumptions to define a notion of Poisson integrator:

\begin{definition}[Hamiltonian Poisson integrator]\label{def:ham_pois_int}
Let $(M,\pi)$ a Poisson manifold and $H \in \mathcal{C}^\infty(M)$ a Hamiltonian on $M.$ A smooth family of diffeomorphisms of $M$ $(\phi_\epsilon)_\epsilon$ is a \emph{Hamiltonian Poisson integrator of order $k\geq 1 $ for $H$} if:
\begin{enumerate}
    \item $\phi_\epsilon$ is a Poisson diffeomorphism,
    \item there exists $(h_t)_t$ a time-dependent Hamiltonian such that 
    \begin{enumerate}
        \item $h_t = H + \smallO{t^{k-1}}$
        \item $\phi_\epsilon = \Phi^\epsilon_{(h_t)_t}$ is the time-$\epsilon$ flow of $h.$
    \end{enumerate}
\end{enumerate}
\end{definition}
It follows easily that

$$\phi_\epsilon = \Phi^\epsilon_{H} + \smallO{t^{k}}$$
in the sense of \ref{theo:Magnus}.

We can now state the main result of this section, which is the core of the explicit constructions of Poisson integrators that will be presented in the sequel. We recall that given an exact family of Lagrangian bisections $L$ on $(\groupoid,\omega),$ their variation functions $(h_t)_t \in \mathcal C^\infty(M \times I)$ denote the pull-back by the source of closed 1-forms obtained from $L$-paths through $\omega.$

\begin{theorem}\label{theorem_Poisson_integrators}
Let $(M,\pi) $ be a Poisson manifold, $(\groupoid,\omega)$ a local symplectic groupoid integrating it and $k \geq 1.$  
For every smooth family $(L_t)_{t \in I}$ of exact Lagrangian bisections such that $L_0=M$ and with variation functions $(h_t)_{t \in I},$ if the Magnus series $\MagnusTwo{\epsilon}{h}$ of $(h_t)_{t \in I}  \in \mathcal C^\infty(M \times I)$ coincides with $\epsilon H$ at order $k$, the induced family of diffeomorphisms $(\phi_{L_t})_{t\in I}$ is a Hamiltonian Poisson integrator of order $k$ for $H$.
\end{theorem}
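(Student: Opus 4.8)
The plan is to verify the two defining conditions of a Hamiltonian Poisson integrator (Definition \ref{def:ham_pois_int}) for the family $(\phi_{L_t})_{t\in I}$, taking the variation functions $(h_t)_{t\in I}$ as the candidate time-dependent Hamiltonian. Condition (1) is immediate: each $L_t$ is a Lagrangian bisection, so the proposition on Lagrangian bisections recalled above makes $\phi_{L_t}=\beta|_{L_t}\circ(\alpha|_{L_t})^{-1}$ a Poisson automorphism of $M$; moreover $L_0=M$ gives $\phi_{L_0}=\mathrm{Id}$, so $(\phi_{L_t})_t$ is a smooth family of Poisson diffeomorphisms through the identity.

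The crux is to realise this family as a Hamiltonian flow on $M$, which is condition (2)(b). I would start from Lemma \ref{lem:11corresp}, which writes $L_\epsilon=\phi^\epsilon_{(\overrightarrow{\xi_t})_t}(M)$ with $\overrightarrow{\xi_t}=(\Omega^{\flat})^{-1}(\alpha^*\xi_t)$; exactness, $\xi_t=\mathrm dh_t$, identifies this right-invariant field with the Hamiltonian vector field of $\alpha^*h_t$ on $\mathcal G$, so that the $L$-paths are its trajectories. The key geometric input is that source and target fibres are symplectically orthogonal, $\{\alpha^*h_t,\beta^*f\}=0$: the flow of $\overrightarrow{\xi_t}$ therefore leaves one of the two projections fixed, while the other moves along $X_{h_t}$ because $\alpha$ is a Poisson map. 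Following a unit $m\in M=L_0$ along the flow and then composing $\beta|_{L_\epsilon}$ with $(\alpha|_{L_\epsilon})^{-1}$ collapses the two projections into the single relation
\[
\phi_{L_\epsilon}=\Phi^\epsilon_{(h_t)_t},
\]
the time-$\epsilon$ flow of the variation functions. Exactness is essential here: for a merely closed $(\xi_t)$ the base motion would be the possibly outer Poisson automorphism generated by $\pi^{\#}(\xi_t)$ of Example \ref{ex:counter-ex}, for which no generating Hamiltonian exists.

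For condition (2)(a) the Magnus hypothesis does the work. Since it states $\MagnusTwo{\epsilon}{h}^{(k)}=\epsilon H$, Theorem \ref{theo:Magnus} gives that $\Phi^\epsilon_{(h_t)_t}$ agrees to order $k$ with $\Phi^1_{\epsilon H}=\Phi^\epsilon_H$; equivalently, reading the recursion \eqref{Magnus_brackets} order by order shows the hypothesis to be exactly $h_0=H$ together with $\partial_t^{j}h_t|_{0}=0$ for $1\le j\le k-1$. Indeed, once the lower time-derivatives vanish every bracket term of \eqref{Magnus_brackets} is a $\bigO{\epsilon^{j+2}}$, so the coefficient of $\epsilon^{j+1}$ in $\MagnusTwo{\epsilon}{h}$ reduces to $\tfrac{1}{(j+1)!}\,\partial_t^{j}h_t|_0$, whose vanishing is forced by the hypothesis. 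This is precisely $h_t=H+\smallO{t^{k-1}}$.

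Collecting the three points, $(\phi_{L_t})_t$ is a smooth family of Poisson diffeomorphisms equal to the time-$\epsilon$ flow of a time-dependent Hamiltonian $(h_t)_t$ with $h_t=H+\smallO{t^{k-1}}$, i.e.\ a Hamiltonian Poisson integrator of order $k$ for $H$. I expect the main obstacle to be the middle step: one must determine which projection the groupoid flow preserves and fix all sign conventions (the orientation of $\Omega^{\flat}$ and the choice $\phi_L=\beta|_L\circ\alpha|_L^{-1}$) so that the induced map is the forward flow of $H$ and not of $-H$ or its inverse. The Magnus step is then a routine order-by-order check.
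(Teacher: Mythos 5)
Your proposal is correct and follows essentially the same route as the paper: exactness identifies $(\phi_{L_t})_{t}$ with the flow of the variation functions $(h_t)_{t}$ (which the paper compresses into the sentence ``since $\phi$ is induced by $L$, it is a Hamiltonian Poisson integrator of time-dependent Hamiltonian $h$'', resting on Remark \ref{rk:Ham_Poisson_integrator} and Lemma \ref{lem:11corresp}), and the Magnus hypothesis then gives the order via Theorem \ref{theo:Magnus}, exactly as you do. Your order-by-order extraction of $h_0=H$ and $\partial_t^j h_t|_{t=0}=0$ for $1\le j\le k-1$ from \eqref{Magnus_brackets} is a welcome extra detail the paper leaves implicit, since it checks condition 2(a) of Definition \ref{def:ham_pois_int} directly rather than only the resulting flow agreement, and your caveat about fixing the sign conventions in the identification $\phi_{L_\epsilon}=\Phi^\epsilon_{(h_t)_t}$ is likewise left tacit in the paper's proof.
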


\begin{remark}

We invite the reader to understand Theorem \ref{theorem_Poisson_integrators} as meaning that, provided a symplectic groupoid integrating a Poisson structure is entirely known and computable, then finding a Hamiltonian Poisson integrator reduces to a Magnus series question. This is the first part of the construction we have announced in the introduction.
\end{remark}

\begin{proof}[Proof of theorem \ref{theorem_Poisson_integrators}]
Since $\phi$ is induced by $L,$ it is a Hamiltonian Poisson integrator for $H$, of time-dependent Hamiltonian $h.$\\
We compute its order. For any $f \in \mathcal{C}^{\tiny \infty}(M)$:

\begin{align*}
    \phi_\epsilon^{*}f &= (\Phi^\epsilon_{(h_t)_{t \in I}})^*f \\
    &= (\Phi^1_{\epsilon H})^* f + \smallO{t^{k}}\\
    &= (\Phi^\epsilon_{H})^*f + \smallO{t^{k}}.
\end{align*}
\end{proof}

\begin{corollary}
Let $(M,\pi) $ be a Poisson manifold and $(\groupoid,\omega) $ a local symplectic groupoid integrating it.  
For every smooth family $(L_t)_{t \in I}$ of exact Lagrangian bisections with $L_0=M$,
if  $\left.\frac{\partial L_t}{\partial t}\right|_{t=0} = d H$, then its induced diffeomorphisms $(\phi_{L_t})_{t\in I}$ are Hamiltonian Poisson integrators of order $1$ for $H$.
\end{corollary}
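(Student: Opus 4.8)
The plan is to reduce the Corollary to Theorem~\ref{theorem_Poisson_integrators} by verifying its single nontrivial hypothesis, namely that the Magnus series $\MagnusTwo{\epsilon}{h}$ of the variation functions $(h_t)_{t \in I}$ agrees with $\epsilon H$ at order $1$. First I would recall that the hypothesis $\left.\frac{\partial L_t}{\partial t}\right|_{t=0} = dH$ is a statement about the variation form $\xi_0$ of the family at $t=0$: by the construction preceding Lemma~\ref{lem:11corresp}, the variation $1$-forms $(\xi_t)_{t\in I}$ live in $\Omega^1(M)$ and satisfy $\xi_0 = dH$. Since the family is assumed \emph{exact}, the variation functions $(h_t)_{t\in I}$ satisfy $dh_t = \xi_t$ for all $t$, and in particular $dh_0 = \xi_0 = dH$.

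The key step is then a direct inspection of the lowest-order term of the Magnus series. From the practical formula \eqref{Magnus_brackets}, or equivalently from the defining differential equation \eqref{magnus} together with the initial condition $\MagnusTwo{0}{h} = 0$, the zeroth-order contribution is $\MagnusOne{h}_0 = h_0$, so that
\begin{equation*}
    \MagnusTwo{\epsilon}{h} = \epsilon\, h_0 + \smallO{\epsilon}.
\end{equation*}
Comparing with $\epsilon H$, the order-$1$ coincidence amounts to the equality $dh_0 = dH$ as elements of $\Omega^1(M)$ (the variation functions, hence $h_0$, being determined only up to a constant, it is their differentials that enter the Hamiltonian dynamics). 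This is exactly what the hypothesis $\left.\frac{\partial L_t}{\partial t}\right|_{t=0} = dH$ delivers via the identification of the previous paragraph. Therefore $\MagnusTwo{\epsilon}{h}$ and $\epsilon H$ coincide at order $1$.

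Having verified this hypothesis with $k=1$, I would invoke Theorem~\ref{theorem_Poisson_integrators} directly: the induced family of diffeomorphisms $(\phi_{L_t})_{t\in I}$ is a Hamiltonian Poisson integrator of order $1$ for $H$, which is the claim. The only point requiring care---and the main (mild) obstacle---is the bookkeeping around the constant-in-$m$ ambiguity in the variation functions noted in the remark after the definition of exactness: one must phrase the order-$1$ condition at the level of differentials (equivalently, of variation forms) rather than of the functions themselves, so that the hypothesis stated as an equality of $1$-forms $\left.\frac{\partial L_t}{\partial t}\right|_{t=0} = dH$ matches the hypothesis of Theorem~\ref{theorem_Poisson_integrators} phrased in terms of the Magnus series. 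Since a constant shift in $h_0$ affects neither the Hamiltonian vector field nor the higher Magnus brackets, this ambiguity is harmless, and the reduction goes through.
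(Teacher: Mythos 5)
Your proposal is correct and is exactly the route the paper intends: the corollary is stated as an immediate consequence of Theorem~\ref{theorem_Poisson_integrators}, since $L_0=M$ makes the variation form at $t=0$ a $1$-form on $M$ equal to $dH$, so (choosing the primitive with $h_0=H$, which the constant-in-$m$ ambiguity permits) the Magnus series satisfies $\MagnusTwo{\epsilon}{h}=\epsilon h_0+\smallO{\epsilon}=\epsilon H+\smallO{\epsilon}$. One trivial notational slip: with the paper's indexing $\MagnusOne{h}_0=0$ and $\MagnusOne{h}_1=h_0$ (the lowest-order \emph{nonzero} term is the first-order one), but your displayed expansion and the conclusion are right.
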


\subsection{Examples of Poisson integrators}\label{sec:Pois_integ}

The description of Poisson integrators in Theorem \ref{theorem_Poisson_integrators} unifies already known constructions: the classical Euler-Symplectic scheme \cite{Hairer2006} (see Example \ref{ex:EulerSymplectic}), the mid-point method for the harmonic oscillator (see Example \ref{ex:mid_point}), and the Kahan discretization of Lotka-Volterra system already described by Pol Vanhaecke \cite{vanhaecke2016} (see Example \ref{ex:Kahan}).

\begin{example}[Euler symplectic scheme for a separable Hamiltonian]
\label{ex:EulerSymplectic}
For a general Hamiltonian $H \in \mathcal{C}^\infty(T^*\mathbb{R}^d)$ and $(q,p)$ cotangent coordinates, the symplectic Euler scheme (\cite{Hairer2006}) is:

\begin{equation}
\label{eq:PoissonBasic}
    \left\{
    \begin{array}{ll}
        q_{n+1} = q_n + \Delta t \frac{\partial H}{\partial p}(q_n,p_{n+1})\\
        p_{n+1} = p_n - \Delta t \frac{\partial H}{\partial q}(q_{n},p_{n+1})
    \end{array}
\right. .
\end{equation}
Let us interpret this implicit (in the generic case) numerical scheme as a Poisson integrator at order $1$ for $H$, in the sense of Theorem \ref{theorem_Poisson_integrators}.  
We use the notations of the latter theorem:
\begin{enumerate}
    \item[$(M,\omega)$] On $M= T^*\mathbb{R}^d $, we denote by $(q,p)$ some canonical cotangent coordinates, and we consider the canonical Poisson structure associated to the symplectic $2$-form $\omega = \sum_{i=1}^d dp_i \wedge d q_i $.
    \item[$(\mathcal G,\Omega)$] the symplectic groupoid integrating the Poisson manifold $T^*\mathbb{R}^d$ is the pair groupoid $\mathcal G :=T^*\mathbb{R}^d \times T^*\mathbb{R}^d  $.
    \item[$(L_\epsilon)$] A direct computation shows that the submanifold
    $$L_\epsilon = \left\{\tilde q = q + \epsilon \frac{\partial H}{\partial p}(q,\tilde p) \hbox{ and }
        \tilde p = p - \epsilon \frac{\partial H}{\partial q}(q,\tilde p) \right\}$$  
    is Lagrangian in $(\mathcal G, \Omega)$. Here $(q,p)$ and $(\tilde q, \tilde p)$ are cotangent coordinates on the first and second components of $\mathcal G $ respectively.
    \item[] For $\epsilon  $ small enough, it is also a bisection of $\mathcal G $, at least after restriction to a relatively compact open subset. To simplify the presentation, we will assume that it is a globally defined bisection. For $\epsilon=0 $, the bisection is the unit manifold $M= T^*\mathbb{R}^d $.

\item[$(\phi_\epsilon)$] The bisections $L_\epsilon $ define a smooth family of symplectomorphisms of $(M=T^*\mathbb{R}^d,\omega) $, which are precisely, for $\epsilon = \Delta t $, the symplectic Euler Poisson integrator \eqref{eq:PoissonBasic}.
\item[$(h_t,\mathbb H_\epsilon)$] 
Under the simplifying assumption that $H$ is separable, i.e. it splits in the following form:
$$H: T^*\mathbb{R}^d \to \mathbb{R}: (q,p) \mapsto V(q) + K(p)$$
with $V$ and $K$ two smooth real-valued functions, we can compute explicitely: 
\begin{enumerate}
\item[] the variation functions i.e. \begin{equation} \label{eq:ht}
    h_t \colon (q,p) \mapsto K(p) + V\left(q + t \frac{\partial K}{\partial p}(p)\right)  
\end{equation}
\item[] and the modified Hamiltonian is the Magnus series of $h_t$ with first two terms
\begin{equation}
\label{eq:Hepsilon}
    \mathbf H_\epsilon  := \epsilon H + \frac{\epsilon^2}{2} \left\langle \frac{\partial V}{\partial q}, \frac{\partial K}{\partial p} \right\rangle + \smallO{\epsilon^2}\\
\end{equation}
\end{enumerate}
\end{enumerate}

Let us explain how we computed \eqref{eq:ht}. Under the assumption on $H,$ the Lagrangian submanifold corresponding to \eqref{eq:PoissonBasic} becomes 

\begin{equation}
    \left\{
    \begin{array}{ll}
        \tilde q = q + \epsilon \frac{\partial K}{\partial p}(p - \epsilon \frac{\partial V}{\partial q})\\
        \tilde p = p - \epsilon \frac{\partial V}{\partial q}(q)
    \end{array}
\right. .
\end{equation}
Using the relations 

\begin{equation}
\left\{\begin{array}{ll}
    q = \tilde q - \epsilon  \frac{\partial K}{\partial p}(\tilde p) \\
    p = \tilde p + \epsilon \frac{\partial V}{\partial q}(\tilde q - \epsilon \frac{\partial K}{\partial p}(\tilde p))
\end{array} \right.,    
\end{equation}
the differentiation of the $L$-path $\phi_\epsilon =
\begin{pmatrix}
\tilde{q}\\
\tilde{p}
\end{pmatrix}$ with respect to $\epsilon$ gives :

$$\partial_\epsilon  \begin{pmatrix}
\tilde{q}\\
\tilde{p}
\end{pmatrix} = 
\begin{pmatrix}
\frac{\partial K}{\partial p}(\tilde p) - \epsilon \textbf{H}\text{ess}(K)(\tilde p). \frac{\partial V}{\partial q}(\tilde q - \epsilon \frac{\partial K}{\partial p}(\tilde p)) \\
\frac{\partial V}{\partial q}(\tilde q - \epsilon \frac{\partial K}{\partial p}(\tilde p))
\end{pmatrix}
= \begin{pmatrix}
\partial_p h_\epsilon (\tilde q, \tilde p) \\
-\partial_q h_\epsilon  (\tilde q, \tilde p)
\end{pmatrix}$$
and is consequently Hamiltonian with respect to the time-dependent Hamiltonian $h.$
\end{example}

\begin{example}[Mid-point scheme]\label{ex:mid_point}

For this example, the Hamiltonian is the harmonic oscillator $H\colon (q,p) \mapsto \frac{1}{2}(\|q\|^2 + \|p\|^2)$, for which it is well-known that the mid-point scheme

\[
    \left\{
    \begin{array}{ll}
        q_{n+1} = q_n + \Delta t \frac{p_{n} + p_{n+1}}{2}\\
        p_{n+1} = p_n - \Delta t \frac{q_{n} + q_{n+1}}{2} \\
    \end{array}
\right.,
\]

is symplectic.
\begin{enumerate}
    \item[$(M,\omega)$] The Poisson manifold $M= T^*\mathbb{R}^d$ is the same as the last example.
    \item[$(\mathcal G,\Omega)$] As a consequence, the symplectic groupoid $T^*\mathbb{R}^d \times T^*\mathbb{R}^d$ doesn't differ as well.
    \item[$(L_\epsilon)$] For any $\epsilon \in \mathbb{R},$ the submanifold 
    $$L_\epsilon = \left\{\tilde q = q + \epsilon \frac{p + \tilde{p}}{2} \hbox{ and }
        \tilde p = p - \epsilon \frac{q + \tilde{q}}{2} \right\}$$  
    is a Lagrangian bisection in $(\mathcal G, \Omega).$
    \item[$(h_t,\mathbb H_\epsilon)$] The above one-step forward map $\phi_t \colon (q,p) \mapsto (\tilde{q}, \tilde{p})$ induces a vector field $X_t = \partial_t \phi_t \circ \phi_t^{-1}$. One verifies that $X_t$ is colinear to the Hamiltonian vector field of $H,$ and so are $H$ and the time-dependent Hamiltonian:

$$ h_t(q,p) = \frac{(1-\frac{t^2}{4})^2 + t^2}{(1+ \frac{t^2}{4})^3}H(q,p).$$ The modified Hamiltonian is simply 
$$ \mathbb H_\epsilon  = \int_0^\epsilon \frac{(1-\frac{t^2}{4})^2 + t^2}{(1+ \frac{t^2}{4})^3} \text{d}t \times H$$
and the discretisation preserves $H$.
\end{enumerate}

\end{example}

\begin{example}[Linear Hamiltonian on the dual of a Lie algebra]
Let $G$ a Lie group, $\mathfrak{g}$ its Lie algebra and consider its symplectic groupoid $G \times \mathfrak{g}^* \rightrightarrows \mathfrak{g}^*.$ As the coadjoint action of $G$ on $\mathfrak{g}^*$ preserves the Lie bracket, a Poisson scheme discretising the flow of a linear Hamiltonian $f \in \mathfrak{g}$ is given by:

$$x_{n+1} = \text{Ad}_{\exp(\epsilon f)}^* x_n$$
and corresponds to the Lagrangian bisections $\{ (\exp (\epsilon f),x), x \in \mathfrak{g}^* \} \subset G \times \mathfrak{g}^*.$
\end{example}

\begin{example}[Kahan discretization of one Lotka-Volterra system]
\label{ex:Kahan}
For the quadratic Poisson bracket on $\mathbb{R}^d$ given by:
\begin{equation}
\label{eq:quadratic} \{x_i,x_j\} = x_i x_j \text{ if } 1 \leq i < j \leq d 
\end{equation}
and the linear Hamiltonian $H(x) = \sum_{i = 1}^d x_i,$ a Poisson scheme is given in \cite{vanhaecke2016} by the Kahan discretisation 
\begin{equation}\label{Kahan}
    x_i^{(n+1)}-x_i^{(n)} =  \Delta t \, x_i^{(n)} \Big(\sum_{j>i} x_j^{(n+1)} - \sum_{j<i} x_j^{(n+1)} \Big) +  \Delta t \, x_i^{(n+1)} \Big(\sum_{j>i} x_j^{(n)} - \sum_{j<i} x_j^{(n)}\Big)
\end{equation}
where $n$ is the iteration indice of the scheme and $x = (x_i)$ are coordinates on $\mathbb{R}^d$.

Let us interpret this discretization in terms of Theorem \ref{theorem_Poisson_integrators}:

\begin{enumerate}
    \item[$(M,\pi)$] is $M=\mathbb R^d $ with the Poisson structure \eqref{eq:quadratic}.
    \item[$(\mathcal G,\Omega)$] Its symplectic groupoid is 
    
    \noindent$\mathcal G = T^*\mathbb{R}^d,$\\
    $\Omega = \sum_{i} \text{d}x_i \wedge \text{d}p_i + 
        \sum_{i,j}(\delta_{i < j} - \delta_{i > j}) x_i p_j \text{d}x_i \wedge \text{d}p_j + \sum_{j < i} p_i p_j \text{d}x_i \wedge \text{d}x_j + \sum_{j < i} x_i x_j \text{d}p_i \wedge \text{d}p_j $\\
    $\alpha \colon (x,p) \mapsto x,$ \\
    $\beta \colon (x,p) \mapsto \left(e^{\sum_{i} (\delta_{i < j} - \delta_{i > j}) x_i p_i } x_j \right)_{1 \leq j \leq n}$\\
    
    \noindent with $(x,p)$ cotangent coordinates on $T^*\mathbb{R}^d.$

    \item[$(h_t)$] The variation function is given by:
$$h_t (x) = H(x) \times \frac{\partial f}{\partial t}(t,H(x)),$$
where
$$\begin{array}{ccccc}
f &: & (\mathbb{R},0)\times \mathbb{R} & \to & \mathbb{R} \\
 & & (t,u) & \mapsto & \frac{e^{u t} - 1}{u (e^{u t } + 1)} \\
\end{array}.$$ 
    \item[$(L_\epsilon)$] The family of Lagrangian submanifolds $L_\epsilon $ are given by $L_\epsilon = \Phi^\epsilon_{(\alpha^*h_t)_{t \in \mathbb{R}}}(\mathbb{R}^d).$
    
    $L_0$ is the unit manifold.
    \item[$(\phi_\epsilon)$] The induced Poisson diffeomorphism is precisely \eqref{Kahan} for $\epsilon =\Delta t$.
     \item[$(H_\epsilon )$] The modified Hamiltonian is simply $ \mathbf H_\epsilon (x):=  H(x) f(\epsilon, H(x)) $.    
    \end{enumerate}

Let us give some details on these points.

Let $\phi_\epsilon$ the map implicitly defined by \eqref{Kahan} and 

$\begin{array}{ccccc}
f &: & (\mathbb{R},0)\times \mathbb{R} & \to & \mathbb{R} \\
 & & (t,u) & \mapsto & \frac{e^{u t} - 1}{u (e^{u t } + 1)} \\
\end{array},$ then following proposition 3.1 of \cite{vanhaecke2016},
$$\phi_\epsilon(x) = \Phi_H^{f(t,x)}(x).$$
In this case, as 

$$ \Phi_H^{f(t,H(x))}(x) = \Phi_{h}^t(x)$$
where

$$h_t (x) = H(x) \times \frac{\partial f}{\partial t}(t,H(x)),$$
the Lagrangian bisection associated to \eqref{Kahan} are the image of $\mathbb{R}^d$ by the flows of the right-invariant vector fields associated to $\text{d}h$ in the symplectic groupoid of $(\mathbb{R}^d,\{.,.\}).$
\end{example}

\begin{example}[Splitting methods through Lagrangian bisections]
This example is inspired by \cite{Koseleff93}. Let $\groupoid$ be a symplectic groupoid integrating a Poisson manifold $M$ and $H = H_1 + H_2$ be a splitted Hamiltonian such that for each $H_i,$ one knows a smooth family of Lagrangian bisections $(L^i_t)_t$ inducing a Poisson integrator $(\phi^i_\epsilon)_\epsilon$ for $H_i$ at order $1$, with variation functions $(h_t^i)_t$.

\begin{enumerate}
    \item[$(L_\epsilon)$]  The composition of Lagrangian bisections in $\mathcal{G}$ is a Lagrangian bisection again, so that
    $$ t\mapsto  L^2_t \circ L^1_t$$
    is a family of Lagrangian bisections. It is easily checked to induce a Poisson integrator for $H$ at order $1$. 
    
    \item[($\phi_\epsilon$)] The induced Poisson diffeomorphism is the composition $\phi_\epsilon = \phi^1_\epsilon \phi^2_\epsilon.$
    
    \item[$(h_t)$] The variation function is $$h_t = h^1_t +  h^2_t  (\phi^1_{t})^{-1} $$ and equals $H$ at order $1.$
\end{enumerate}

For Poisson integrator at order $k$, the situation is more complicated. As shown by \cite{Koseleff93} in the symplectic context, we then have to compose several times the bisections, and use the following consequence of the Baker-Campbell-Hausdorff formula: assume we are given $\phi^1$ and $\phi^2$ two Poisson integrators for $H_1$ and $H_2$ at order 1, then there exists $n \in \mathbb{N}$ and $(c_l^j)_{\substack{j = 1,2 \\ 1 \leq l \leq n}}$ such that $L_\epsilon^{(k)} = \Pi_{l=n}^1 L^2_{c_l^2 \epsilon }.L^1_{c_l^1 \epsilon }$ induces a Poisson integrator for $H$ at order $k.$ 

\begin{enumerate}
    \item[$(L_\epsilon)$] Lagrangian bisections are $L_\epsilon^{(k)} = \Pi_{l=n}^1 L^2_{c_l^2 \epsilon }.L^1_{c_l^1 \epsilon }.$ 
    
    \item[($\phi_\epsilon$)] The induced Poisson diffeomorphism is the composition $\phi_\epsilon = \Pi_{l=1}^n \phi^1_{c_l^1 \epsilon } . \phi^2_{c_l^2 \epsilon }.$
    
    \item[$(h_t)$,$(H_\epsilon )$] The variation function is $$h_t = c_1^1 h^1_t + c^2_1 h^2_t  (\phi^1_{{c_1^1}t})^{-1} + c^1_2 h^1_t  (\phi^1_{{c_1^1}t}  \phi^2_{{c_1^2}t})^{-1}  + \ldots$$ and equals $H$ at order $k-1.$ The modified Hamiltonian is the Magnus series $\MagnusOne{h}$ of $h.$
\end{enumerate}

\end{example}

\section{Hamilton-Jacobi equation on the local symplectic groupoid}\label{sec:Ham_Jac}

When the symplectic groupoid is known, i.e. a symplectomorphism with $T^*M$ is given, constructive Poisson integrator of arbitrary order for an arbitrary Hamiltonian can be given. This will turn results of section \ref{sec:Poisson_integrators} into constructive ones.

\subsection{Geometry of Lagrangian bisections in the cotangent bundle of the base}\label{sec:Lag_bis_cotangent}

Let us recall a classical result of Poisson geometry:

\begin{theorem}
\cite{Weinstein1987}-\cite{Zung2005}-\cite{Crainic2011} There exists a neighborhood of $T^*M$ that carries a structure of local symplectic groupoid $\mathcal{G}$ on the base $M.$ Its symplectic form is the canonical one and its unit map is the zero section.
\end{theorem}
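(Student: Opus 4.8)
The plan is to obtain the statement from two largely independent ingredients: a local integration of the cotangent Lie algebroid of $(M,\pi)$ realised on a neighbourhood of the zero section of $T^*M$, and a Moser-type normalisation that turns the resulting multiplicative symplectic form into the canonical one while transporting the whole groupoid structure. I would keep these two steps separate, since the first is where the genuine Lie-theoretic content sits and the second is essentially bookkeeping.

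For the first step I would start from the algebroid side. Recall that $A=T^*M$ carries the cotangent Lie algebroid structure of $\pi$, with anchor $\pi^\sharp\colon T^*M\to TM$ and Koszul bracket
$$[\alpha,\beta]_\pi=\mathcal{L}_{\pi^\sharp\alpha}\beta-\mathcal{L}_{\pi^\sharp\beta}\alpha-d\big(\pi(\alpha,\beta)\big),$$
normalised by $[df,dg]_\pi=d\{f,g\}$. This is exactly the algebroid that the units of any symplectic groupoid integrating $(M,\pi)$ must carry, as recorded among the properties of symplectic groupoids above. To integrate it inside $T^*M$ I would fix a \emph{Poisson spray} $\mathcal V$, i.e.\ a vector field on $T^*M$ that is homogeneous of degree $1$ in the fibres and projects to the anchor, $\tau_*\mathcal V_\xi=\pi^\sharp(\xi)$. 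Its flow $\phi^s_{\mathcal V}$ fixes the zero section, and the averaged form
$$\Omega_{\mathcal V}=\int_0^1 (\phi^s_{\mathcal V})^*\omega_{can}\,ds$$
is symplectic on a neighbourhood of the zero section; the projection $\tau$ then becomes a symplectic realisation of $(M,\pi)$, which integrates on a neighbourhood of the zero section to a local groupoid $\mathfrak U(M)\rightrightarrows M$ whose unit manifold is the zero section. The local associative multiplication is pinned down by requiring its graph to be Lagrangian in $\big(T^*M\big)^3$ for $\mathrm{pr}_1^*\Omega_{\mathcal V}+\mathrm{pr}_2^*\Omega_{\mathcal V}-\mathrm{pr}_3^*\Omega_{\mathcal V}$, which is the usual source-simply-connected local integration of the cotangent algebroid.

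For the second step I would normalise $\Omega_{\mathcal V}$ to $\omega_{can}$. Because $\mathcal V$ vanishes along the zero section, $\phi^s_{\mathcal V}$ restricts to the identity there, so $\Omega_{\mathcal V}$ and $\omega_{can}$ agree as $2$-forms along the zero section; moreover a tubular neighbourhood retracts onto $M$, so both forms are exact and cohomologous there. A Moser isotopy then produces a diffeomorphism $\psi$, fixing the zero section pointwise, with $\psi^*\omega_{can}=\Omega_{\mathcal V}$. Transporting the source, target, unit and multiplication by $\psi$ yields a local symplectic groupoid on a neighbourhood of the zero section whose symplectic form is now exactly $\omega_{can}$ and whose units are still the zero section, since multiplicativity and all the structure maps are preserved under conjugation by a symplectomorphism.

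The step I expect to be the main obstacle is the first one, and specifically the two analytic claims inside it: that $\Omega_{\mathcal V}$ is nondegenerate on a genuine neighbourhood of the zero section, and that the Lagrangian-graph prescription really defines an honest, associative, local multiplication with the zero section as units and $\tau$ (resp.\ its spray-twin) as source and target. This is precisely the content of the local Lie's third theorem for the cotangent algebroid, and it is where the references \cite{Weinstein1987,Zung2005,Crainic2011} do the real work. By contrast, once agreement along $M$ and cohomological triviality are checked, the Moser normalisation of the second step is routine, so the whole difficulty is concentrated in establishing the local groupoid before worrying about which symplectic form it carries.
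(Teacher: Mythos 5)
The paper offers no proof of this statement at all: it is quoted from \cite{Weinstein1987}, \cite{Zung2005}, \cite{Crainic2011}, and your two-step architecture (Crainic--M\u{a}rcu\c{t} spray realisation, then a Moser normalisation) is exactly the toolkit those references and the paper itself invoke later, when it computes bi-realisations via ``the Poisson spray of \cite{Crainic2011} and Moser's trick''. So the route is the right one, but your execution of the Moser step rests on a false claim. You assert that since $\mathcal V$ vanishes along the zero section, $\Omega_{\mathcal V}$ and $\omega_{can}$ ``agree as $2$-forms along the zero section''. The flow $\phi^s_{\mathcal V}$ does fix the zero section pointwise, but its derivative there is not the identity: in coordinates the linearised flow sends $(a,b)\in T_xM\oplus T_x^*M$ to $(a+s\,\pi^\sharp b,\,b)$, and a short computation gives, for $v=(a,b)$, $v'=(a',b')$, that $\Omega_{\mathcal V}(v,v')=\omega_{can}(v,v')+\pi(b,b')$ at points of the zero section. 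The two forms therefore differ by the vertical--vertical block $\pi$ whenever $\pi\neq 0$; this mismatch is not a defect but precisely what makes $\tau$ a symplectic realisation of $(M,\pi)$ --- if the forms agreed along $M$, pushing $\Omega_{\mathcal V}^{-1}$ forward by $\tau$ would yield the zero bivector along $M$, contradicting your own first step. The Moser normalisation can still be carried out, but for a different reason: both forms are exact near the zero section, with primitives $\theta_{can}$ and $\bar\theta=\int_0^1(\phi^s_{\mathcal V})^*\theta_{can}\,ds$ that vanish at points of the zero section (the tautological form vanishes there and the flow fixes those points), and the linear interpolation of the two forms stays nondegenerate along $M$ (adding a vertical--vertical term to the canonical pairing keeps it invertible). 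Hence the Moser vector field vanishes along $M$, the isotopy fixes the zero section pointwise, and the transport argument you describe goes through.

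The second gap is in your construction of the local multiplication: requiring the graph to be Lagrangian in the triple product does not ``pin down'' a multiplication. Lagrangianity is the multiplicativity \emph{property} of a product already given, not a characterisation --- the triple product contains many Lagrangian submanifolds, and composing a multiplication graph with a Lagrangian bisection produces others. The actual content of the Coste--Dazord--Weinstein/Karasev theorem is that a symplectic realisation $\alpha$ admitting a Lagrangian section carries a germ of a local symplectic groupoid whose multiplication is genuinely \emph{constructed}, e.g.\ from composing flows of the Hamiltonian lifts $X_{\alpha^* f}$, $f\in\mathcal C^\infty(M)$ (equivalently, by local integration of the cotangent algebroid). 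Your appeal to ``the usual source-simply-connected local integration'' is also off: source-simple-connectedness has no meaning for local groupoids, where uniqueness is only germwise, as the paper itself records (two local symplectic groupoids integrating $(M,\pi)$ are isomorphic near $M$). Since you explicitly defer exactly this construction to the references, your proposal ultimately reduces to the citation the paper already gives, together with a normalisation step whose stated justification is incorrect, though repairable as indicated above.
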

This theorem is, in its general form, an existence theorem. However, in many cases, the source and target maps of the groupoid structure on $(T^*M, \omega_{can}) $ can be made explicit.

We call a \emph{bi-realisation} of a Poisson manifold $(M,\pi)$ a triple $(\mathcal U, \alpha, \beta )$ made of a neighborhood of the zero section $\mathcal U \subset T^*M$ symplectomorphic to a local symplectic groupoid integrating $(M,\pi)$ such that the zero section corresponds to $M$ by this symplectomorphism and whose source and target are $\alpha $ and $\beta.$

\begin{remark}
Notice that, as explained in section \ref{sec:symp_grpid}, for any birealisation $(M,\alpha, \beta) $, the source $\alpha \colon \mathcal U \to M$ is a Poisson submersion and the target $\beta  \colon \mathcal U \to M$ an anti-Poisson submersion. Also, we do not specify the groupoid product.
\end{remark}

\noindent
Let us illustrate the notion of birealisation in some cases of interest. We make use of the so-called \emph{Poisson spray} of \cite{Crainic2011} and Moser's trick in a neighborhood of $M$ to compute bi-realisations of examples \ref{ex:groupoid_triv} and \ref{ex:Lotka-Volterra}.

\begin{example}\label{ex:groupoid_triv}
For the Poisson structure $\partial_p \wedge \partial_q$ of $T^*\mathbb{R}^n$ with coordinates $(q,p),$ denoting $(q,p,\xi_q,\xi_p)$ the induced coordinates on $T^*T^*\mathbb{R}^n,$ the choice of the Poisson spray $\xi_p \partial_q - \xi_q \partial_p$ gives the following birealisation:
\begin{equation}
    \left\{
    \begin{array}{ll}
        \alpha: (q,p,\xi_q,\xi_p) \mapsto (q - \frac{1}{2} \xi_p, p + \frac{1}{2} \xi_q)\\
        \beta: (q,p,\xi_q,\xi_p) \mapsto (q + \frac{1}{2}\xi_p, p - \frac{1}{2}\xi_q)
    \end{array}
    \right.
\end{equation}

\end{example}

\begin{example}
When $(M, \omega_M)$ is symplectic, there is no ``natural'' (i.e. preferred) way to send symplectically a neighborhood of the diagonal of the pair groupoid $(M \times M, p_1^* \omega_M -  p_2^* \omega_M)$ on a neighborhood of $M$ in $T^*M$. More precisely, there are as many ways as choices of Lagrangian bundles such that fibers are transverse to the diagonal in $M \times M$.
In fact, birealisations are in one-to-one correspondence with symplectomorphisms between a neighborhood of the zero section in $T^*M $ and a neighborhood of the diagonal in $M$. However, they may not be computable explicitly in general.
\end{example}

\begin{example}\label{groupoid_lin}
Let $G$ be a Lie group with Lie algebra $\mathfrak g $. 
Consider $\varphi$ a diffeomorphism from an open subset  $U \subset G$ to an open subset of $\mathfrak U \subset \mathfrak g$ mapping $1_G $ to $0$.

Since $\varphi $ is a diffeomorphism, $T \varphi \colon TU \to T\mathfrak U$ is an invertible vector bundle morphism, and so is $ T^* \varphi \colon T^*\mathfrak U \to T^* U$.
It is moreover a symplectomorphism, when $T^* \mathfrak U$ and $T^* U $ are equipped with their respective canonical structures. Since the source and target of $T^* G \simeq \mathfrak g^* \times G $ are given by $\alpha \colon (\xi,g) \mapsto \xi$  and $\beta \colon (\xi,g) \mapsto Ad_g^* \xi$, it suffices to transport those through $T^* \varphi$ to get a birealisation.

Let us be more explicit: with the cotangent lift
\begin{equation*}
    T^* \varphi:\underset{\xi_x}{T^*\mathfrak{g}}\underset{\mapsto}{\to}\underset{^{t}(d_{\varphi^{-1} x} \varphi).\xi_x}{T^*G}
\end{equation*}
and the natural isomorphism $T^*\mathfrak{g} \simeq T^*\mathfrak{g}^*,$ the symplectic groupoid of the dual of a Lie algebra $T^*G \rightrightarrows \mathfrak{g}^*$ becomes indeed $\mathfrak{g} \times \mathfrak{g}^*$ near $\mathfrak{g}^*$ with source and target:

\begin{equation}
    \left\{
    \begin{array}{ll}
        \alpha: (\mathfrak{g},0) \times \mathfrak{g}^* \to \mathfrak{g}^*: (\eta,\xi) \mapsto \left (L_{\varphi^{-1}(\eta)} {}^* T_{\varphi^{-1}(\eta)}^*\varphi \right).\xi\\
        \beta: (\mathfrak{g},0) \times \mathfrak{g}^* \to \mathfrak{g}^*: (\eta,\xi) \mapsto \left(R_{\varphi^{-1}(\eta)} {}^* T_{{\varphi^{-1}(\eta)}}^*\varphi \right).\xi
    \end{array}
    \right.
\end{equation}

The most natural diffeomorphism $\varphi $ is of course the logarithm map $\log: G \to \mathfrak g$. There are however other ones, like, e.g.:
\begin{enumerate}
    \item for $\mathfrak g $ the Lie subalgebra of $n \times n$ nilpotent matrices, the map $\varphi \colon x \to {\mathrm{id}}+x  $
    \item for $\mathfrak g $ the Lie subalgebra of  skew-symmetric $n \times n$  matrices, the map $x \mapsto \frac{{\mathrm{id}}+x/2}{{\mathrm{id}}-x/2} $ is also a diffeomorphism in a neighborhood of $0$.
\end{enumerate}
\end{example}

\begin{example}\label{ex:Lotka-Volterra}
The symplectic groupoid $\mathcal{G} \rightrightarrows \mathbb{R}^{n}$ of the real log-canonical Poisson bracket on $\mathbb{R}^n$, i.e.: 
\begin{equation}
\label{eq:quadratic2}
\{x_i, x_j \} = a_{ij} x_i x_j,
\end{equation}
with $(a_{ij})_{i,j}$ a skew-symmetric matrix is computed in \cite{Li2018} and is shown to be globally diffeomorphic to  $T^*\mathbb{R}^n$. The explicit structures given in \cite{Li2018} can be modified such that $\mathcal{G} =T^*\mathbb{R}^n$ is equipped with the canonical symplectic structure. The source and target maps defined in \cite{Li2018} then become, with $(x,p)$ cotangent coordinates on $T^*\mathbb{R}^n$:

\begin{equation}
    \left\{
    \begin{array}{ll}
        \alpha: (x,p) \mapsto \left( e^{-\frac{1}{2} \sum_i a_{ij} x_i p_i }. x_j \right)_{j=1, \dots, n}\\
        \beta: (x,p) \mapsto \left( e^{\frac{1}{2} \sum_i a_{ij} x_i p_i }. x_j \right)_{j=1, \dots, n}
    \end{array}
    \right.
\end{equation}
The triple $(T^*\mathbb{R}^n,\alpha,\beta)$ is a bi-realisation of the Poisson structure \eqref{eq:quadratic2}.
\end{example}

\subsection{Lagrangian bisections and Hamilton-Jacobi equation}
\label{sec:Ham_Jac_new}

We are now ready to use bi-realisations in order to look for Poisson integrators that approximate the flow of a Hamiltonian $H$, by considering them as graphs of closed 1-forms on $M$.

More precisely, assume we are given $(U,\alpha,\beta)$ a bi-realisation of a Poisson manifold $(M,\pi)$ and $H$ a Hamiltonian function. 
In the sequel, we will see from \eqref{HJ_Pois} that to the flow of $H$ corresponds a family $(L_t)_{t \in I} $ of Lagrangian bisections of the symplectic groupoid $(\mathcal{G}, \Omega) $, with $L_0=M $. Reducing $I$ if necessary, the bisections $(L_t)_{t \in I} $ become Lagrangian submanifolds in an open subset  $ U $ of $( T^* M, \omega_{can})$. Since $L_0$ is the zero section, $L_t$ is the graph of a closed $1$-form $ \zeta_t \in \mathcal C^\infty(M)$ depending smoothly on $t$.
This form is exact, thanks to the following proposition. The first and second points are consequences of proposition \ref{prop:11corresp} and example \ref{ex:Weinsteins_embedding} respectively.

\begin{proposition}
Let $\xi \in \Omega^1_0(U),$ $U \subset M$ an open subset and $I$ an open interval containing $0$ such that the flow of $\pi^{\#}(\xi)$ is defined for all $t \in I$ on $U.$
\begin{enumerate}
    \item There exists a unique smooth family of $1$-forms $(\zeta_t)_{t \in I}$ such that $\bar \zeta_t = \Phi^t_{\overrightarrow{\xi}}(M).$
    \item  For all $t,$ $\zeta_t$ is exact if and only if $\xi$ is also exact.
\end{enumerate}
\end{proposition}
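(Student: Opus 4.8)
The plan is to read off both assertions from Proposition \ref{prop:11corresp} and Example \ref{ex:Weinsteins_embedding}, exactly as the statement advertises; the only genuine work is the cohomological bookkeeping hidden in the second point.

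For the first point I would apply Proposition \ref{prop:11corresp} to the pair consisting of the zero section $L_0 = M$, which is a Lagrangian bisection, and the \emph{constant} family of closed $1$-forms $\xi_t := \xi$. The hypothesis that the flow of $\pi^\#(\xi)$ is defined on $U$ for all $t \in I$ is precisely the completeness-type condition the proposition needs, so (through Lemma \ref{lem:11corresp}) it yields a smooth family of Lagrangian bisections $L_t = \Phi^t_{\overrightarrow{\xi}}(M)$. Each $L_t$ is a Lagrangian submanifold of $(T^*M,\omega_{can})$; since $L_0$ is the zero section and the family depends smoothly on $t$, reducing $I$ if necessary $L_t$ projects diffeomorphically onto $M$ and is therefore the graph $\bar\zeta_t$ of a $1$-form $\zeta_t$, automatically closed because $L_t$ is Lagrangian and smooth in $t$ because $(L_t)$ is. Uniqueness is immediate, as a graph determines its defining form.

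For the second point the crucial input is the comparison of the two families of closed $1$-forms attached to $(L_t)$ in Example \ref{ex:Weinsteins_embedding}, which in our constant-$\xi$ situation reads $\alpha^*\xi = \tau^*\dot\zeta_t$ as $1$-forms on $L_t$, with $\dot\zeta_t = \tfrac{d}{dt}\zeta_t$. Pulling this identity back along the inverse of the diffeomorphism $\tau|_{L_t}\colon L_t \to M$ converts it into an equality on $M$,
\begin{equation*}
\dot\zeta_t = \psi_t^*\,\xi, \qquad \psi_t := \alpha|_{L_t}\circ (\tau|_{L_t})^{-1}\colon M \to M.
\end{equation*}
Because $L_0 = M$ is the zero section we have $\psi_0 = \mathrm{id}$, and $(\psi_t)_t$ is a smooth isotopy, so $\psi_t$ induces the identity on de Rham cohomology. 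Hence $[\dot\zeta_t] = [\xi]$ in $H^1(U)$ for every $t$, and integrating from $\zeta_0 = 0$ gives $[\zeta_t] = t\,[\xi]$. Exactness can now be read off: for $t \neq 0$ the class $[\zeta_t]$ vanishes if and only if $[\xi]$ does, so $\zeta_t$ is exact exactly when $\xi$ is, while $\xi$ exact forces every $\zeta_t$ to be exact.

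The step I expect to be the main obstacle is ensuring that $[\dot\zeta_t] = [\xi]$ holds for \emph{all} $t$ and not just at $t=0$; this is what the isotopy invariance of the de Rham class under $\psi_t$ supplies. It rests entirely on correctly matching the variation form of $(L_t)$ in the two realisations of Example \ref{ex:Weinsteins_embedding}. Once that identification is secured, the linear evolution $[\zeta_t] = t[\xi]$, and with it the exactness equivalence, follows with no further computation.
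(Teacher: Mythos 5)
Your proposal is correct and follows exactly the route the paper itself takes: the paper's entire proof consists of attributing point (1) to Proposition \ref{prop:11corresp} and point (2) to Example \ref{ex:Weinsteins_embedding}, which is precisely how you proceed. Your only addition is to make explicit the cohomological bookkeeping ($\dot\zeta_t = \psi_t^*\xi$ with $\psi_t$ isotopic to the identity, hence $[\zeta_t] = t[\xi]$) that the paper leaves implicit, and this filling-in is accurate, including your handling of the degenerate case $t=0$.
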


\begin{corollary}
The closed $1$-forms are exact, $\zeta_t = d S_t$. 
with $(S_t)_t \in \mathcal{C}^\infty(M \times I)$ a solution of 
\begin{equation}\label{HJ_Pois}
    \left\{
    \begin{array}{ll}
        \partial_t S_t (m) &=  (\tau_{|\underline{dS_t}})^{-1} {}^* \alpha_{|\underline{dS_t}}^* H (m)  + \chi(t) \\
        S_0 &= 0
    \end{array}
    \right.
\end{equation}
where $ \chi(t) \in \mathcal C^\infty(I, \mathbb R)$ is any smooth function and $\tau $ is the cotangent projection.
\end{corollary}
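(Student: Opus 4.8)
The plan is to apply the preceding Proposition to the exact form $\xi = dH$ and then to convert the equality of variation forms recorded in Example \ref{ex:Weinsteins_embedding} into the Hamilton-Jacobi equation \eqref{HJ_Pois}.

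First I would pin down the relevant family: by Remark \ref{rk:Ham_Poisson_integrator} the bisections attached to the flow of $H$ are $L_t = \Phi^t_{\overrightarrow{dH}}(M)$, an \emph{exact} family whose variation function is $H$, so that its groupoid variation $1$-form (in the sense of Proposition \ref{prop:11corresp}) is the constant family $\xi_t = dH$. Feeding $\xi = dH$ into the Proposition, part (1) returns the unique smooth family $(\zeta_t)_{t\in I}$ of closed $1$-forms with $\bar\zeta_t = L_t$, and part (2) gives that each $\zeta_t$ is exact precisely because $dH$ is. Writing $\zeta_t = dS_t$ and normalizing $S_0 = 0$ --- legitimate since $L_0 = M$ is the zero section, i.e. the graph of the zero form --- settles the first claim of the Corollary.

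For the equation itself I would use that the variation $1$-form of $(L_t)_t$ is intrinsic to $(\mathcal G, \omega_{can})$ and hence admits the two readings of Example \ref{ex:Weinsteins_embedding}: through the groupoid it is $\alpha_{|L_t}^*\xi_t = \alpha_{|L_t}^* dH$, whereas viewing $L_t = \bar\zeta_t$ as a graph in $T^*M$ (Example \ref{1-forms}) it is $\tau_{|L_t}^*\partial_t\zeta_t = \tau_{|L_t}^* d(\partial_t S_t)$. Equating the two yields, as $1$-forms on $L_t$,
\begin{equation*}
    \alpha_{|L_t}^* \, dH \;=\; \tau_{|L_t}^* \, d(\partial_t S_t).
\end{equation*}
Since $L_t$ is a bisection that is at the same time a graph over $M$, both $\alpha_{|L_t}$ and $\tau_{|L_t}$ are diffeomorphisms onto $M$, with $(\tau_{|L_t})^{-1} = \underline{dS_t}$; pulling the displayed identity back along $(\tau_{|L_t})^{-1}\colon M \to L_t$ collapses the right-hand side to $d(\partial_t S_t)$ and the left-hand side to $d\big((\tau_{|\underline{dS_t}})^{-1}{}^*\alpha_{|\underline{dS_t}}^* H\big)$. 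The two primitives therefore differ by a term $\chi(t)$ that is constant on each connected component of $M$, which together with $S_0 = 0$ is exactly \eqref{HJ_Pois}.

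The step I expect to demand the most care is the comparison of the two variation forms together with the descent to $M$. One must shrink $I$ so that every $L_t$ remains a bisection lying inside $U$ --- this is what makes $\alpha_{|L_t}$ and $\tau_{|L_t}$ simultaneously diffeomorphisms onto the base --- observe that the canonical $2$-form plays the double role of groupoid form and cotangent form, which is exactly what allows the two readings of the single intrinsic variation form to be compared, and finally interpret the integration term $\chi(t)$ as the residual freedom in the choice of the primitive $S_t$.
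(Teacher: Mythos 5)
Your proposal is correct and follows essentially the same route the paper intends: it applies the preceding Proposition with $\xi = dH$ to obtain existence and exactness of the $(\zeta_t)_t$, and then equates the two readings of the variation form from Example \ref{ex:Weinsteins_embedding} (groupoid side $\alpha_{|L_t}^*\,dH$ versus graph side $\tau_{|L_t}^*\,d(\partial_t S_t)$), descending to $M$ via the diffeomorphisms $\alpha_{|L_t}$ and $\tau_{|L_t}$ to recover \eqref{HJ_Pois} with $\chi(t)$ as the residual freedom in the primitive. The paper leaves this argument implicit (it only cites Proposition \ref{prop:11corresp} and Example \ref{ex:Weinsteins_embedding}), and your write-up makes exactly those steps explicit, including the normalization $S_0=0$ from $L_0=M$ being the zero section.
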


We call \eqref{HJ_Pois} the
\emph{Hamilton-Jacobi equation} for a Poisson structure. 

\begin{remark}
Let us comment on the intial condition $S_0 = 0.$ In the context of this article, we are mainly interested with local embeddings of the symplectic groupoid $\mathcal{G}$ in some cotangent bundle $T^*V$ such that the unit space $M$ coincides with the base $V$. It may happen, though, that one considers embeddings where this property does not hold. For instance, the symplectic groupoid $T^*G$ of the dual of an integrable Lie algebroid $A = \text{Lie}(G)$ is naturally fibered on its groupoid $G$ but the fibration is transverse to the unit space. In those cases, it still makes sense to look for Lagrangian bisections as graphs of closed forms, but only if they are far from $M.$ There, one might relax the condition $S_0 = 0$ and build a family of Poisson automorphisms that are not perturbation of the identity map.
\end{remark}

\begin{theorem}\label{HJ_thm}
Assume we are given $(U,\alpha,\beta)$ a bi-realisation of a Poisson manifold $ (M,\pi)$ and $H$ a Hamiltonian function.
\begin{enumerate}
    \item The Hamilton-Jacobi equation \eqref{HJ_Pois} admits a solution $(S_t)_t$ in a neighborhood of $M \times \{0\} \subset M \times \mathbb R$.
    \item The family of Poisson automorphisms induced by the Lagrangian bisections $(dS_t)_t$ is the flow of $H$.
\end{enumerate}

\end{theorem}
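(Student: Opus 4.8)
The plan is to realise both assertions through a single geometric object: the right-invariant vector field $\overrightarrow{dH}=(\Omega^{\flat})^{-1}(\alpha^{*}dH)$ on $\mathcal G = U$, whose flow issued from the unit manifold will produce the bisections. First I would set $L_t:=\Phi^{t}_{\overrightarrow{dH}}(M)$. As $dH$ is smooth and its anchor image $\pi^{\#}(dH)=X_H$ is a smooth vector field on $M$, the flow $\Phi^{t}_{\overrightarrow{dH}}$ is defined on a neighborhood of $M\times\{0\}$ in $\mathcal G\times\mathbb R$, and staying near the zero section it remains inside $U$. By (the local version of) Proposition \ref{prop:11corresp} together with Lemma \ref{lem:11corresp}, the $(L_t)_t$ then form a smooth family of Lagrangian bisections with $L_0=M$ and constant variation form $dH$.

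For item (1) I would argue that, $L_0=M$ being the zero section of $T^{*}M$ and hence transverse to the fibres of $\tau$, transversality is an open condition, so after shrinking the neighborhood each $L_t$ is the graph $\bar\zeta_t$ of a $1$-form $\zeta_t$ on $M$; this form is closed by Lemma \ref{lem_closed}. Because the groupoid-level variation form $dH$ is exact, the Proposition immediately preceding the statement forces $\zeta_t$ to be exact, $\zeta_t=dS_t$, and the Corollary following it is precisely the assertion that the normalisation $S_0=0$ makes $(S_t)_t$ a solution of the Hamilton--Jacobi equation \eqref{HJ_Pois}, the freedom $\chi(t)$ being the purely $t$-dependent constant invisible to $d$. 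This yields the desired local existence.

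For item (2) I would compute $\phi_{L_t}=\beta\circ(\alpha_{|L_t})^{-1}$ from the two structural properties of symplectic groupoids recalled in Section \ref{sec:symp_grpid}. Since source and target fibres are symplectically orthogonal, $\{\alpha^{*}H,\beta^{*}g\}=0$ for every $g\in\mathcal C^{\infty}(M)$; hence $\overrightarrow{dH}$ is tangent to the $\beta$-fibres and $\beta$ is frozen along its flow, $\beta\circ\Phi^{t}_{\overrightarrow{dH}}=\beta$. Since $\alpha$ is a Poisson map, $\overrightarrow{dH}$ is $\alpha$-related to $-X_H$, so $\alpha\circ\Phi^{t}_{\overrightarrow{dH}}=\Phi^{-t}_{H}\circ\alpha$. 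Taking a unit $u\in M$ (so $\alpha(u)=\beta(u)=u$) and $g=\Phi^{t}_{\overrightarrow{dH}}(u)\in L_t$, we read off $\beta(g)=u$ and $\alpha(g)=\Phi^{-t}_{H}(u)$, whence $\phi_{L_t}\bigl(\Phi^{-t}_{H}(u)\bigr)=u$, i.e. $\phi_{L_t}=\Phi^{t}_{H}$. A check against Example \ref{ex:symp_manifold} (the symplectic pair groupoid, where $\alpha,\beta$ are the two projections and $L_t=\{(x,\Phi^{t}_H(x))\}$) confirms the time direction. As adding $\chi(t)$ changes neither $dS_t$ nor $L_t$, the conclusion is independent of the chosen $\chi$.

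The two steps demanding genuine care are, in item (1), the translation of the equality of variation forms $\alpha^{*}dH=\tau^{*}\tfrac{d\zeta_t}{dt}$ of Example \ref{ex:Weinsteins_embedding} into the precise analytic shape of \eqref{HJ_Pois} -- in particular the bookkeeping of the diffeomorphism $\tau_{|L_t}$ and of the integration constant $\chi(t)$ -- and, in item (2), pinning down the orientation so that the induced diffeomorphism is the forward flow rather than its inverse. The real geometric content, however, is carried entirely by the two relatedness facts for $\alpha$ and $\beta$: it is their combination, $\beta$ frozen while $\alpha$ is dragged along the Hamiltonian flow, that forces the groupoid flow to project to $\Phi^{t}_{H}$ downstairs.
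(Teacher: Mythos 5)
Your proposal is correct, and for item (1) it is essentially the paper's own argument: the paper's (very terse) proof also consists of flowing the unit section by the right-invariant vector field $\overrightarrow{dH}=(\Omega^{\flat})^{-1}(\alpha^{*}dH)$, observing that near $t=0$ the resulting Lagrangian bisections are graphs of closed $1$-forms in the bi-realisation, and invoking the proposition and corollary preceding the theorem to get exactness and the precise shape of \eqref{HJ_Pois}. Where you genuinely diverge is item (2). The paper stays inside its variation-form formalism: as in Example \ref{ex:Weinsteins_embedding}, it computes the variation functions of the family $(\underline{dS_t})_t$ and finds their differential to be $(\alpha_{|\underline{dS_t}}^{-1})^{*}\,\tau_{|\underline{dS_t}}^{*}\,d\partial_t S_t = dH$, so that, by the correspondence of Section \ref{sec:back_analysis} between exact families of bisections and flows of their variation Hamiltonians, the induced automorphisms are the flow of $H$. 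You instead prove (2) by bare groupoid structure: symplectic orthogonality of source and target fibres gives $\{\alpha^{*}H,\beta^{*}g\}=0$, hence $\beta$ is frozen along the flow of $\overrightarrow{dH}$, while $\alpha$-relatedness drags $\alpha$ by the Hamiltonian flow downstairs, and evaluating on units yields $\phi_{L_t}=\beta\circ(\alpha_{|L_t})^{-1}=\Phi^{t}_{H}$ directly. Your route is more self-contained and makes the mechanism ($\beta$ fixed, $\alpha$ transported) explicit, whereas the paper's route has the advantage of identifying the variation functions as $H$ itself, which is exactly the datum reused in Theorems \ref{theorem_Poisson_integrators} and \ref{thm:num_csq}. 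One caveat you rightly flag: the sign in ``$\overrightarrow{dH}$ is $\alpha$-related to $-X_H$'' depends on the convention relating $\Omega^{\flat}$ to Hamiltonian vector fields, which the paper never pins down; with the opposite convention the identical argument produces $\Phi^{-t}_{H}$, so your consistency check against the pair groupoid of Example \ref{ex:symp_manifold} is the appropriate (and sufficient) way to fix the orientation, and in either case only the time direction of the family, not the validity of the statement, is at stake.
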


\begin{proof}
The embedding of $\groupoid$ in $T^*M$ allows to express Lagrangian bisections near the base with graphs of closed 1-forms in a smooth way. That explains the first point.

Similar computation as \ref{ex:Weinsteins_embedding} gives the Hamiltonian induced by $(dS_t)_t,$ which admits the differential:
\begin{equation*}
    (\alpha_{|\underline{dS_t}}^{-1})^* \tau_{|\underline{dS_t}} {}^* d\partial_t S_t   = (\alpha_{|\underline{dS_t}}^{-1})^* d\overrightarrow{H} = dH.
\end{equation*}

\end{proof}

\begin{remark}
Let us relate the usual Hamilton-Jacobi equation described in Section \ref{sec:classic_Ham_Jac} with the equation we present in this section.

For the first one: $\Phi^t_{\overrightarrow{H}}(T^*Q) \subset T^*Q \times T^*Q$ is related by some graph of exact one-form $dS_t$ on $Q \times Q$ by $\Psi.$

For the second one: $\Phi^t_{\overrightarrow{H}}(M) \subset \mathcal{G}$ is related by some graph of exact one-form $dS_t$ on $M$ by the bi-realisation.

This equation is analogous to \eqref{eq:diff_class_Ham_Jac} in the sense of variations of Lagrangian bisections. Indeed, \eqref{eq:diff_class_Ham_Jac} measures Lagrangian perturbations of the diagonal in $T^*Q \times T^*Q$ by 1-forms on $Q \times Q$ through the canonical symplectomorphism (\ref{isomorphism2}) while the one of this section measures Lagrangian perturbations of $M$ in its local symplectic groupoid by 1-forms on $M$ through some bi-realisation.

\end{remark}

\subsection{Main result and numerical consequences}\label{sec:num_csq}

The computation of $(S_t)_t$ is not of interest from a numerical aspect because it is equivalent to integrate the Hamiltonian flow. Nevertheless, a natural consequence of theorem \ref{HJ_thm} is that the first terms of the expansion of $(S_t)_t$ with respect to $t$ induce an approximation of similar order of the flow of $H$:

\begin{theorem}\label{thm:num_csq}
Assume we are given $(U,\alpha,\beta)$ a bi-realisation of a Poisson manifold $ (M,\pi)$ and $H$ a Hamiltonian function. 
Define recursively a family $(S_i)_{i \in \mathbb{N}}$ of smooth functions on $M$ by
$S_0=0 $, $ S_1 = H$, $ S_2(m) = \frac{1}{2}\frac{d}{dt}|_{t=0} H(\alpha(td_mH) ) $, and 
\begin{equation}\label{eq:recursion}
    S_{i+1}(m) = \left. \frac{1}{(i+1) !}\frac{d^i}{dt^i}\right|_{t=0} H\left( \alpha\left(d_m S_t^{(i)} \right)\right)
\end{equation}
where we write $S_t^{(i)} =\sum_{j=1}^i t^j S_j.$

The family of Poisson automorphisms associated to the Lagrangian bisections $d \left( S_t^{(k)} \right) $ are Hamiltonian Poisson integrators of order $k$ for $H$ with variation functions :
\begin{equation}
    dh_t = (\tau_{|\underline{dS_t^{(k)}}} \circ \alpha_{|\underline{dS_t^{(k)}}}^{-1})^* d\partial_t S_t^{(k)}\\
\end{equation}
and the modified Hamiltonian verifies $\MagnusTwo{\epsilon}{h} = \epsilon H + \smallO{\epsilon^{k}}.$

\end{theorem}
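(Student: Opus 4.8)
The plan is to reduce Theorem \ref{thm:num_csq} to the already-established Theorem \ref{theorem_Poisson_integrators}, so that the only real work is verifying that the recursively defined truncation $S_t^{(k)} = \sum_{j=1}^k t^j S_j$ produces variation functions $(h_t)_t$ whose Magnus series matches $\epsilon H$ up to order $k$. First I would invoke Theorem \ref{HJ_thm}: its exact solution $(S_t)_t$ of the Hamilton-Jacobi equation \eqref{HJ_Pois} induces the \emph{exact} flow of $H$. The strategy is therefore to expand \eqref{HJ_Pois} formally in powers of $t$ and observe that the recursion \eqref{eq:recursion} is precisely the order-by-order solution of that equation: plugging the Taylor ansatz $S_t = \sum_j t^j S_j$ into $\partial_t S_t(m) = (\tau_{|\underline{dS_t}})^{-1}{}^*\alpha_{|\underline{dS_t}}^* H(m)$ (taking the normalization $\chi \equiv 0$) and matching the coefficient of $t^i$ on both sides yields exactly $S_{i+1} = \frac{1}{(i+1)!}\frac{d^i}{dt^i}\big|_{t=0} H(\alpha(d_m S_t^{(i)}))$, since the right-hand side at order $i$ depends only on $S_1,\dots,S_i$. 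The key point is that the right-hand side, read off at order $i$, never involves $S_{i+1}$ or higher, which is what makes the recursion well-posed and what guarantees that $S_t^{(k)}$ agrees with the true $S_t$ up to order $k$ in $t$.

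Next I would translate this agreement in $(S_t)$ into agreement of the induced dynamics. By the corollary preceding \eqref{HJ_Pois}, the Lagrangian bisection $\underline{dS_t^{(k)}}$ has variation function $h_t$ determined by $dh_t = (\tau_{|\underline{dS_t^{(k)}}} \circ \alpha_{|\underline{dS_t^{(k)}}}^{-1})^* d\partial_t S_t^{(k)}$, exactly the formula in the statement; and this is a legitimate exact family of Lagrangian bisections with $L_0 = M$ because $S_0^{(k)} = 0$. Since $S_t^{(k)} = S_t + \smallO{t^k}$ and the map $S \mapsto h$ is differential-algebraic in $S_t$ and its $t$-derivative, one gets $h_t = h_t^{\mathrm{exact}} + \smallO{t^{k-1}}$ where $h_t^{\mathrm{exact}}$ is the variation function of the true flow. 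From Theorem \ref{HJ_thm}(2) the true family is the flow of $H$, so its Magnus series is exactly $\epsilon H$ (this follows from the proof of \ref{HJ_thm}, where the induced Hamiltonian has differential $dH$). Hence $\MagnusTwo{\epsilon}{h}$ and $\epsilon H$ agree to order $k$, and Theorem \ref{theorem_Poisson_integrators} immediately yields that $(\phi_{dS_t^{(k)}})_t$ is a Hamiltonian Poisson integrator of order $k$.

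The main obstacle I anticipate is the bookkeeping in the first step: making precise that truncating $S_t$ at order $k$ costs only $\smallO{t^{k-1}}$ in the variation function and hence $\smallO{t^k}$ in the Magnus series, rather than losing an order somewhere. The subtlety is that $h_t$ is built from $\partial_t S_t^{(k)}$, a $t$-derivative, and from the pullback by the $t$-dependent diffeomorphism $\tau \circ \alpha^{-1}$ restricted to $\underline{dS_t^{(k)}}$; one must check that differentiating and composing with these maps does not degrade the error estimate below what \ref{theorem_Poisson_integrators} requires. I would handle this by expanding everything as formal power series in $t$ and tracking orders explicitly, exactly as in the proof of Theorem \ref{theo:Magnus}, where pullbacks are defined termwise on formal series; the self-consistency of the recursion \eqref{eq:recursion} (each $S_{i+1}$ depending only on lower-order data) is what ultimately ensures the orders line up. A secondary point worth a line is confirming that the hypotheses of \ref{theorem_Poisson_integrators} genuinely hold, namely that $(dS_t^{(k)})_t$ is a smooth family of \emph{exact} Lagrangian bisections with $L_0 = M$; smoothness and the bisection property near $M$ follow from the bi-realisation being a symplectomorphism onto a neighborhood of the zero section, and exactness is guaranteed by the corollary to the preceding proposition.
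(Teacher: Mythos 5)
Your proposal is correct and follows essentially the same route the paper intends: the paper gives no separate proof of Theorem \ref{thm:num_csq}, presenting it as ``a natural consequence of theorem \ref{HJ_thm}'', and your argument --- identifying the recursion \eqref{eq:recursion} as the order-by-order Taylor solution of the Hamilton--Jacobi equation \eqref{HJ_Pois} (with $\chi \equiv 0$), deducing $S_t^{(k)} = S_t + \smallO{t^{k}}$, hence $h_t = H + \smallO{t^{k-1}}$ and $\MagnusTwo{\epsilon}{h} = \epsilon H + \smallO{\epsilon^{k}}$, and then invoking Theorem \ref{theorem_Poisson_integrators} --- is precisely that reduction, spelled out. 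Your order bookkeeping is also sound: the $t$-derivative in the variation function costs one order, recovered upon integration since the leading Magnus term is $\int_0^\epsilon h_t\,\mathrm{d}t$ and the iterated-bracket corrections are of strictly higher order because $\{H,H\}=0$.
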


\begin{remark}
The term of $S_t$ of order 1 in $t$ is necessarily $H.$
\end{remark}

Our general algorithm of a Poisson integrator of timestep $\Delta t$ for $H$ at order $k$, following remark \ref{rk:Ham_Poisson_integrator} and theorem \ref{thm:num_csq}, is given by the three steps:
\begin{enumerate}
    \item Use recursion \eqref{eq:recursion} to compute the $k$-th terms of $(S_t^{(k)})_t.$
    
    \item starting from $x \in M,$ solve 
    
    \begin{equation}\label{bisection}
        x = \alpha( d_{\overline{x}}S_{\Delta t}^{(k)}), \; \overline{x} \in M,
    \end{equation}
    \item and project
    
    \begin{equation}
        \tilde{x} = \beta( d_{\overline{x}}S_{\Delta t}^{(k)}).
    \end{equation}
\end{enumerate}

It is clear that for small $\Delta t,$ \eqref{bisection} always has a solution.

\subsection{Examples of Poisson integrators revisited}

\subsubsection{Poisson bracket of the canonical symplectic form of $T^*\mathbb{R}^n$}

The equation \eqref{HJ_Pois}, following choices of example \ref{ex:groupoid_triv}, becomes

\begin{equation}
    \partial_t S_t(q,p) = H\Big(q - \frac{1}{2}\partial_p S_t(q,p), p + \frac{1}{2}\partial_q S_t (q,p)\Big).
\end{equation}
and the corresponding numerical scheme is, starting from $(q,p) \in T^*\mathbb{R}^n:$

\begin{enumerate}[label=(\alph*)]\label{scheme_symp}
    \item solve 
    
    \begin{equation}
\left\{
    \begin{array}{ll}
        q = \overline{q} - \frac{1}{2}\partial_p S_{\Delta t}(\overline{q},\overline{p})  \\
        p = \overline{p} + \frac{1}{2}\partial_q S_{\Delta t} (\overline{q},\overline{p}) 
    \end{array}, \; (\bar q, \bar p) \in T^*\mathbb{R}^n
\right.
\end{equation}

    \item project

    \begin{equation}
\left\{
    \begin{array}{ll}
        \tilde{q} = \overline{q} + \frac{1}{2}\partial_p S_{\Delta t}(\overline{q},\overline{p})  \\
        \tilde{p} = \overline{p} - \frac{1}{2}\partial_q S_{\Delta t} (\overline{q},\overline{p}) 
    \end{array}
\right.
\end{equation}
\end{enumerate}

\begin{remark}
It is remarkable that the scheme \ref{scheme_symp} for the harmonic oscillator $H = \frac{x^2+y^2}{2}$ at order $1,$ i.e. for the lagrangian bisection given by the graph of $t \text{d}H,$ produces the mid-point scheme \ref{ex:mid_point}.
\end{remark}

\subsubsection{Linear Poisson bracket on the dual of a Lie algebra}

The equation \eqref{HJ_Pois}, following choices of example \ref{groupoid_lin}, becomes

\begin{equation}
    \partial_t S_t(x) = H(L_{\varphi^{-1} d_x S_t}^* T_{\varphi^{-1} d_x S_t}^*\varphi x)
\end{equation}

and the corresponding numerical scheme is given by the two steps: starting from $x \in \mathfrak{g}^*,$

\begin{enumerate}[label=(\alph*)]
    \item solve 
    
        \begin{equation*}
            \left(L_{\varphi^{-1} d_{\overline{x}} S_{\Delta t}}^* T_{\varphi^{-1} d_{\overline{x}} S_{\Delta t}}^*\varphi \right). \overline{x} = x, \; \bar x \in \mathfrak{g}^* 
        \end{equation*}
    
    \item project 
    
        \begin{equation*}
            \tilde{x} = \left(R_{\varphi^{-1} d_{\overline{x}} S_{\Delta t}}^* T_{\varphi^{-1} d_{\overline{x}} S_{\Delta t}}^*\varphi \right). \overline{x}
        \end{equation*}

\end{enumerate}

\subsubsection{Quadratic constant Poisson bracket}

Here, the Hamilton-Jacobi equation reads:

\begin{equation}
    \partial_t S_t (x) = H\Big( (e^{-\frac{1}{2} \sum_i a_{ij} x_i \partial_{x_i} S_t(x_i)} x_j)_j \Big)
\end{equation}

The term of first order in $t$ of $S_t$ is $H.$ The one of second order is :

\begin{equation}
    S_2(x) = -\frac{1}{2} \sum_{1 \leq i,j \leq n} a_{ij} \; x_i x_j \; \partial_{x_i}H(x)  \;   \partial_{x_j}H(x).
\end{equation}

In general, the functions $(S_t)_t$ correspond to the numerical scheme given by the two following steps: starting from $x \in \mathbb{R}^n$

\begin{enumerate}[label=(\alph*)]
    \item solve 
    
        \begin{equation*}
            \left(e^{-\frac{1}{2} \sum_i a_{ij} \overline{x_i} \partial_{x_i} S_{\Delta t}(\overline{x_i})} \overline{x_j}\right)_j = x_j \; \; \; \forall \; 1 \leq j \leq n
        \end{equation*}
    
    \item project 
    
        \begin{equation*}
            \tilde{x}_j = \left( e^{\frac{1}{2} \sum_i a_{ij} \overline{x_i} \partial_{x_i} S_{\Delta t}(\overline{x_i}) }. \overline{x_j} \right)_j  \; \; \; \forall \; 1 \leq j \leq n
        \end{equation*}
\end{enumerate}
to obtain a Poisson scheme of the quadratic Poisson bracket at any desired order.

\section*{Conclusion}

Let us sum up the message of this article. A bi-realisation of a Poisson manifold $M,$ i.e. a symplectomorphism between the local symplectic groupoid and a neighborhood of the base in $T^*M,$ allows to transform, through the analog of the Hamilton-Jacobi equation, a Hamiltonian $H \in \mathcal{C}^\infty(M)$ into a smooth family of functions $(S_t)_t$ on $M$ with $S_0=0$. Then, the recursively computed truncation $S^{(k)}$ of order $k$ of $S$ gives a Poisson integrator $\phi_{\Delta t}$ of order $k$ for $H,$ using the induced Lagrangian bisections $\underline{(\text{d}S_t)_t}$ and the source and targets: $\phi_{\Delta t} = \beta \circ (\alpha_{\underline{dS^{(k)}_{\Delta t}}})^{-1}$.  These integrators have strong geometric properties: not only their iterations stay on the symplectic leaf of the initial point (even a singular one), but they also follow the exact flow of a Hamiltonian on the manifold, which coincides with $H$ up to order $k-1$.

Hence the groupoid formalism developped in section \ref{sec:Poisson_integrators} proved to be useful for the construction of integrators. As one could expect, most existing Poisson integrators were already of that form, although not understood as such. Moreover, the Magnus formula introduced in section \ref{sec:Magnus} gives a new constructive way to compute the modified Hamiltonian of a Hamiltonian Poisson scheme and a new point of view on backward analysis in the context of geometric integrators for symplectic and Poisson geometry.

As mentioned in the introduction, one expects those integrators to be of particular interest in mechanics, where it matters to preserve properties of the dynamics when discretizing trajectories. In order to illustrate the link between their geometric properties and their long-term stability, we implement and benchmark Poisson schemes of section \ref{sec:num_csq} (\cite{Oscar2022}), to study them from a numerical aspect in comparison with other classical and geometric methods available to the community. There we also explain the ``minimal working knowledge'' in geometry to apply those to problems from mechanics.\\

\textbf{Acknowledgments.} I am deeply grateful to Camille Laurent-Gengoux and Vladimir Salnikov for constant attention while writing this article and acknowledge Chenchang Zhu, Aziz Hamdouni and Pol Vanhaecke for inspiring discussions at various stages of this work. 
This work has been supported by the CNRS 80Prime project ``GraNum'' and partially by the PHC Procope ``GraNum 2.0''.

\end{document}